\documentclass[12pt, reqno]{amsart}
\usepackage{amsmath, amsthm, amscd, amsfonts, amssymb, graphicx, color}
\usepackage[bookmarksnumbered, colorlinks, plainpages]{hyperref}
\hypersetup{colorlinks=true,linkcolor=red, anchorcolor=green, citecolor=cyan, urlcolor=red, filecolor=magenta, pdftoolbar=true}

\textheight 22.5truecm \textwidth 14.5truecm
\setlength{\oddsidemargin}{0.35in}\setlength{\evensidemargin}{0.35in}

\setlength{\topmargin}{-.5cm}

\newtheorem{theorem}{Theorem}[section]
\newtheorem{lemma}[theorem]{Lemma}
\newtheorem{proposition}[theorem]{Proposition}
\newtheorem{corollary}[theorem]{Corollary}
\theoremstyle{definition}
\newtheorem{definition}[theorem]{Definition}
\newtheorem{example}[theorem]{Example}

\numberwithin{equation}{section}
\DeclareUnicodeCharacter{2217}{-}
\begin{document}
	
	\setcounter{page}{1}
	
	\title{$\ast$-Operator frame for $Hom_{\mathcal{A}}^{\ast}(\mathcal{X})$ }
	
	\author{Roumaissae Eljazzar$^{1}$ \MakeLowercase{and} Mohamed Rossafi$^{2*}$}
	
	\address{$^{1}$Department of Mathematics, Faculty Of Sciences, University of Ibn Tofail, Kenitra, Morocco}
	\email{\textcolor[rgb]{0.00,0.00,0.84}{roumaissae.eljazzar@uit.ac.ma}}
	
	\address{$^{2}$LaSMA Laboratory Department of Mathematics Faculty of Sciences, Dhar El Mahraz University Sidi Mohamed Ben Abdellah, B. P. 1796 Fes Atlas, Morocco}
	\email{\textcolor[rgb]{0.00,0.00,0.84}{rossafimohamed@gmail.com; mohamed.rossafi@usmba.ac.ma}}
	
	\subjclass[2010]{Primary 42C15; Secondary 46L05.}
	
	\keywords{$\ast-$frame, $\ast-$operator frame, Pro-$C^{\ast}$-algebra, Hilbert pro-$C^{\ast}$-modules, Tensor Product.}
	
	\date{$^{*}$Corresponding author}
	
	\setcounter{page}{1}
	
	\title[$\ast$-Operator frame for $Hom_{\mathcal{A}}^{\ast}(\mathcal{X})$]{$\ast$-Operator frame for $Hom_{\mathcal{A}}^{\ast}(\mathcal{X})$}
	
	\maketitle
	\begin{abstract}
		In this Work, We introduce the concept of $\ast$-operator frame, which is a generalization of $\ast$-frames in Hilbert pro-$C^{\ast}$-modules, and we establish some results, we also study the tensor product of $\ast$-operator frame for Hilbert pro-$C^{\ast}$-modules.
	\end{abstract}
	\section{Introduction}
	In 1952, Duffin and Schaeffer \cite{Duf} introduced the notion of frame in nonharmonic Fourier analysis. In 1986 the work of Duffin and Schaeffer was continued by Grossman and Meyer \cite{Gross}. After their works, the theory of frame was developed and has been popular.
	
	The notion of frame on  Hilbert space has already been successfully extended to pro-$C^{\ast}$-algebras and Hilbert modules. In 2008, Joita \cite{Joita} proposed frames of multipliers in Hilbert pro-$C^{\ast}$-modules and showed that many properties of frames in Hilbert $C^{\ast}$-modules are valid for frames of multipliers in Hilbert modules over pro-$C^{\ast}$-algebras.
	
	$\ast-$frames is a generalization of $\ast$-operator frame, it was introduced by Alijani and Dehghan \cite{ALija}, as a generalization
	of frames in Hilbert C*-modules. Naroei and Nazari \cite{Naro} investigated The $\ast$-frame of multipliers in Hilbert pro-$C^{\ast}$-algebras.
	
	The first purpose of this paper is to give the definition of $\ast$-operator frame in pro-$C^{\ast}$-modules and some properties.
	
	The seconde purpose is to  investigate the tensor product of Hilbert pro-$C^{\ast}$-modules, and to show that tensor product of $\ast$-operator frames for Hilbert pro-$C^{\ast}$-modules $\mathcal{X}$ and $\mathcal{Y}$, present $\ast$-operator frame for $\mathcal{X} \otimes \mathcal{Y}$
	
	In the following section, we give some definitions and basic properties of Hilbert $C^{\ast}$-modules
	
	\section{Preliminaries}
	The basic information about pro-$C^{\ast}$-algebras can be found in the works \cite{Frag,Frago,Fragou,Mallios,Inoue,Philip,Philips}.
	
	$C^{\ast}$-algebra whose topology is induced by a family of continuous $C^{\ast}$-seminorms instead of a $C^{\ast}$-norm is called pro-$C^{\ast}$-algebra.  Hilbert pro-$C^{\ast}$-modules are generalizations of Hilbert spaces by allowing the inner product to take values in a pro-$C^{\ast}$-algebra rather than in the field of complex numbers.
	
	Pro-$C^{\ast}$-algebra is defined as a complete Hausdorff complex topological $\ast$-algebra $\mathcal{A}$ whose topology is determined by its continuous $C^{\ast}$-seminorms in the sens that a net $\{a_{\alpha}\}$ converges to $0$ if 	and only if $p(a_{\alpha})$ converges to $0$ for all continuous $C^{\ast}$-seminorm $p$ on $\mathcal{A}$ (see \cite{Inoue,Lance,Philips}), and we have:
	\begin{enumerate}
		\item[1)] $p(ab) \leq p(a)p(b)$
		\item[2)] $p(a^{\ast}a)=p(a)^{2}$
	\end{enumerate}
	for all  $a , b \in \mathcal{A}$.
	
	If the topology of pro-$C^{*}$-algebra is determined by only countably many $C^{*}$-seminorms, then it is called
	a $\sigma$-$C^{*}$-algebra.
	
	We denote by $sp(a)$ the spectrum of $a$ such that: $sp(a)=\left\{\lambda \in \mathbb{C}: \lambda 1_{\mathcal{A}}-a\right.$ is not invertible $\}$ for all $a \in \mathcal{A}.$ Where $\mathcal{A}$ is unital pro-$C^{*}$-algebra with unite $1_{\mathcal{A}}$.
	
	The set of all continuous $C^{\ast}$-seminorms on $\mathcal{A}$ is denoted by $S(\mathcal{A})$.
	If $\mathcal{A}^{+}$ denotes the set of all positive elements of $\mathcal{A}$, then $\mathcal{A}^{+}$ is a closed convex $C^{*}$-seminorms on $\mathcal{A}.$ 
	\begin{example}
		Every $C^{*}$-algebra is a pro-$C^{*}$-algebra.
		\end{example}
	\begin{proposition} 
		Let $\mathcal{A}$ be a unital pro-$C^{*}$-algebra with an identity $1_{\mathcal{A}}.$ Then for any $p \in S(\mathcal{A}),$ we have:
		\begin{enumerate}
			\item[(1)] $p(a)= p(a^{*})$ for all $a \in A$
			\item[(2)] $p \left(1_{\mathcal{A}}\right)=1$
			\item[(3)] If $a, b \in \mathcal{A}^{+}$ and $a \leq b$, then $ p(a) \leq p(b)$
			\item[(4)] If $1_{\mathcal{A}} \leq b$, then $b$ is invertible and $b^{-1} \leq 1_{\mathcal{A}}$
			\item[(5)] If $a, b \in \mathcal{A}^{+}$ are invertible and $0 \leq a \leq b$, then $0 \leq b^{-1} \leq a^{-1}$
			\item[(6)] If $a, b, c \in \mathcal{A}$ and $a \leq b$ then $c^{*} a c \leq c^{*} b c$
			\item[(7)] If $a, b \in \mathcal{A}^{+}$ and $a^{2} \leq b^{2}$, then $0 \leq a \leq b$
		\end{enumerate}
	\end{proposition}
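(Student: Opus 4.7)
The plan is to handle the seven statements by reducing, wherever possible, to the corresponding classical facts in $C^{\ast}$-algebras. The key technical tool is the Arens--Michael decomposition: for every $p \in S(\mathcal{A})$ the quotient $\mathcal{A}_{p} := \mathcal{A}/\ker p$ is a $C^{\ast}$-algebra (with the induced norm $\|a + \ker p\|_{p} = p(a)$), $\mathcal{A}$ is the inverse limit of the $\mathcal{A}_{p}$'s, and the canonical quotient map $\pi_{p}\colon \mathcal{A}\to\mathcal{A}_{p}$ is a continuous $\ast$-homomorphism preserving positivity and the partial order. Each claim will be translated to a statement about the image $\pi_{p}(a), \pi_{p}(b), \ldots$ in $\mathcal{A}_{p}$ and then invoked from standard $C^{\ast}$-algebra theory.

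For (1), I would argue directly from the axioms of a continuous $C^{\ast}$-seminorm: by submultiplicativity and the $C^{\ast}$-property, $p(a)^{2} = p(a^{\ast}a) \leq p(a^{\ast})\,p(a)$, hence $p(a) \leq p(a^{\ast})$, and symmetry gives equality. For (2), from $p(1_{\mathcal{A}})^{2} = p(1_{\mathcal{A}}^{\ast}1_{\mathcal{A}}) = p(1_{\mathcal{A}})$ the value is $0$ or $1$; because $\mathcal{A}$ is unital and $p$ is a $C^{\ast}$-seminorm attaining at least one nonzero value, one concludes $p(1_{\mathcal{A}})=1$ (equivalently, $\pi_{p}(1_{\mathcal{A}})$ is the unit of $\mathcal{A}_{p}$). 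Statement (3) then follows because $a\leq b$ in $\mathcal{A}$ is preserved by $\pi_{p}$, and for positive elements of a $C^{\ast}$-algebra $0\leq x\leq y$ implies $\|x\|\leq \|y\|$; so $p(a) = \|\pi_{p}(a)\|_{p} \leq \|\pi_{p}(b)\|_{p} = p(b)$.

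Statements (4)--(7) are operator-order inequalities that I would again verify seminorm-by-seminorm, using the fact that $b$ is invertible in $\mathcal{A}$ iff $\pi_{p}(b)$ is invertible in every $\mathcal{A}_{p}$ (this is the standard characterization of invertibility in pro-$C^{\ast}$-algebras). For (4), in $\mathcal{A}_{p}$ the condition $1 \leq \pi_{p}(b)$ gives $\operatorname{sp}(\pi_{p}(b)) \subset [1,\infty)$, so $\pi_{p}(b)$ is invertible with $\pi_{p}(b)^{-1} \leq 1$; assembling these inverses across all $p$ yields $b^{-1}$ in $\mathcal{A}$ satisfying $b^{-1}\leq 1_{\mathcal{A}}$. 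For (5), the $C^{\ast}$-algebra fact that $0\leq x \leq y$ with $x,y$ invertible forces $0\leq y^{-1} \leq x^{-1}$ transfers verbatim. For (6), I would use that positivity is preserved by $c^{\ast}(\cdot)c$: if $b - a = d^{\ast}d$ then $c^{\ast}(b-a)c = (dc)^{\ast}(dc)\geq 0$, a fact that does not even need the seminorm machinery. For (7), the operator monotonicity of the square root in every $\mathcal{A}_{p}$ gives $\pi_{p}(a) = (\pi_{p}(a)^{2})^{1/2} \leq (\pi_{p}(b)^{2})^{1/2} = \pi_{p}(b)$, and because this holds for all $p \in S(\mathcal{A})$ one concludes $a \leq b$ in $\mathcal{A}$.

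The main conceptual obstacle is justifying that order and invertibility in $\mathcal{A}$ can be tested pointwise on the family $\{\pi_{p}\}_{p\in S(\mathcal{A})}$; once this Arens--Michael reduction is in place the remaining steps are essentially routine translations of classical $C^{\ast}$-algebra inequalities. Care should also be taken at (7) to invoke the correct version of Löwner's theorem (operator monotonicity of $t\mapsto t^{1/2}$), which is the only statement whose $C^{\ast}$-algebra proof is nontrivial; everything else follows from direct manipulation of the axioms.
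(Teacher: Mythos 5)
The paper does not actually prove this proposition: it is stated in the preliminaries as a known background fact about unital pro-$C^{\ast}$-algebras, quoted from the literature cited there (Inoue, Phillips, Fragoulopoulou), so there is no internal argument to compare yours against. On its own merits your proposal is correct and follows the standard route: items (1), (2) and (6) are direct from the seminorm axioms and the identity $c^{\ast}(d^{\ast}d)c=(dc)^{\ast}(dc)$, while (3)--(5) and (7) reduce via the quotient maps $\pi_{p}\colon\mathcal{A}\to\mathcal{A}_{p}=\mathcal{A}/\ker p$ to the classical $C^{\ast}$-algebra order facts (norm monotonicity on positives, invertibility from $\operatorname{sp}\subset[1,\infty)$, inversion being order-reversing on invertible positives, operator monotonicity of the square root). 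Two small points deserve explicit care. First, in (2) the statement as given fails for the identically zero seminorm, so you should either exclude $p\equiv 0$ from $S(\mathcal{A})$ (the usual convention) or note, as you implicitly do, that $p(1_{\mathcal{A}})=0$ forces $p\equiv 0$ by submultiplicativity. Second, the facts you treat as "the main conceptual obstacle" are genuinely theorems that should be cited rather than asserted: that $\mathcal{A}_{p}$ is already a $C^{\ast}$-algebra (completeness of the quotient, due to the Arens--Michael/locally $C^{\ast}$-algebra theory in Phillips' and Fragoulopoulou's work), that $a\geq 0$ in $\mathcal{A}$ if and only if $\pi_{p}(a)\geq 0$ for all $p$, and that $b$ is invertible in $\mathcal{A}$ if and only if each $\pi_{p}(b)$ is invertible, the inverses forming a coherent family in the inverse limit; with those references in place the remaining verifications are routine, so your proof is complete in substance.
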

	
	\begin{definition}\cite{Philips}
		A pre-Hilbert module over pro-$C^{\ast}$-algebra $\mathcal{A}$, is a complex vector space $E$ which is also
		a left $\mathcal{A}$-module compatible with the complex algebra structure, equipped with an $\mathcal{A}$-valued inner product $\langle .,.\rangle$ $E \times E \rightarrow \mathcal{A}$ which is $\mathbb{C}$-and $\mathcal{A}$-linear in its first variable and satisfies the following conditions:
		\begin{enumerate}
			\item[1)]
			$\langle \xi, \eta\rangle^{*}=\langle \eta, \xi\rangle 
			$ for every $\xi,\eta \in E$
			\item[2)]$\langle \xi, \xi\rangle \geq 0$ for every $\xi \in E$
			\item[3)] $\langle \xi, \xi\rangle=0 $  if and only if  $\xi=0$
		\end{enumerate}
		for every $\xi, \eta \in E .$ We say $E$ is a Hilbert $\mathcal{A}$-module (or Hilbert pro-$C^{\ast}$-module over $\mathcal{A}$ ). If $E$ is complete with respect to the topology determined by the family of seminorms
		$$
		\bar{p}_{E}(\xi)=\sqrt{p(\langle \xi, \xi\rangle)} \quad \xi \in E, p \in S(\mathcal{A})
		$$
	\end{definition}
	Let $\mathcal{A}$ be a pro-$C^{\ast}$-algebra and let $\mathcal{X}$ and $\mathcal{Y}$ be Hilbert $\mathcal{A}$-modules and assume that I and J be countable index sets.
	A bounded $\mathcal{A}$-module map from
	$\mathcal{X}$ to $\mathcal{Y}$ is called an operators from $\mathcal{X}$ to $\mathcal{Y}$. We denote the set of all operator from $\mathcal{X}$ to $\mathcal{Y}$ by  $Hom_{\mathcal{A}}(\mathcal{X}, \mathcal{Y})$.
	
	\begin{definition}
		An $ \mathcal{A}$-module map $T: \mathcal{X} \longrightarrow \mathcal{Y}$  is adjointable if there is a map $T^{\ast}: \mathcal{Y} \longrightarrow \mathcal{X}$ such that $\langle T \xi, \eta\rangle=\left\langle \xi, T^{\ast} \eta\right\rangle$ for all $\xi \in \mathcal{X}, \eta \in \mathcal{Y}$, and is called bounded if for all $p \in S(\mathcal{A})$, there is $M_{p}>0$ such that $\bar{p}_{\mathcal{Y}}(T \xi) \leq M_{p} \bar{p}_{\mathcal{X}}(\xi)$ for all $\xi \in \mathcal{X}$.
		
		We denote by $Hom_{\mathcal{A}}^{\ast}(\mathcal{X}, \mathcal{Y})$, the set of all adjointable operator from $\mathcal{X}$ to $\mathcal{Y}$ and $Hom_{\mathcal{A}}^{\ast}(\mathcal{X})=Hom_{\mathcal{A}}^{\ast}(\mathcal{X}, \mathcal{X})$	 
	\end{definition}
	
	\begin{definition}
		Let $\mathcal{A}$ be a pro-$C^{\ast}$-algebra and $\mathcal{X}, \mathcal{Y}$ be two Hilbert $\mathcal{A}$-modules. The operator $T: \mathcal{X} \rightarrow \mathcal{Y}$ is called uniformly bounded below, if there exists $C>0$ such that for each $p \in S(\mathcal{A})$,
		\begin{equation*}
			\bar{p}_{\mathcal{Y}}(T \xi) \leqslant C \bar{p}_{\mathcal{X}}(\xi), \quad \text { for all } \xi \in \mathcal{X} 
		\end{equation*}
		and  is called uniformly bounded  above if there exists $C^{\prime}>0$ such that for each $p \in S(\mathcal{A})$,
		\begin{equation*}
			\bar{p}_{\mathcal{Y}}(T \xi) \geqslant C^{\prime}  \bar{p}_{\mathcal{X}}(\xi), \quad \text { for all } \xi \in \mathcal{X}
		\end{equation*}
		
		\begin{equation*}
			\|T\|_{\infty}=\inf \{M: M \text { is an upper bound for } T\}
		\end{equation*}
		\begin{equation*}
			\hat{p}_{\mathcal{Y}}(T)=\sup \left\{\bar{p}_{\mathcal{Y}}(T(x)): \xi \in \mathcal{X}, \quad \bar{p}_{\mathcal{X}}(\xi) \leqslant 1\right\}
		\end{equation*}
		It's clear to see that,  $\hat{p}(T) \leqslant\|T\|_{\infty}$ for all $p \in S(\mathcal{A})$.	
	\end{definition}
	\begin{proposition}\cite{Azhini}. \label{Prop2.6}
		Let $\mathcal{X}$ be a Hilbert module over pro-$C^{*}$-algebra $\mathcal{A}$ and $T$ be an invertible element in $Hom_{\mathcal{A}}^{\ast}(\mathcal{X})$ such that both are uniformly bounded. Then for each $\xi \in \mathcal{X}$,
		$$
		\left\|T^{-1}\right\|_{\infty}^{-2}\langle \xi, \xi\rangle \leq\langle T \xi, T \xi \rangle \leq\|T\|_{\infty}^{2}\langle \xi, \xi\rangle
		.$$
	\end{proposition}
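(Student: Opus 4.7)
The plan is to prove the upper bound first and then to derive the lower bound by applying the upper bound to the invertible operator $T^{-1}$ evaluated at $T\xi$.

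For the upper bound, the key step is the operator inequality $T^{\ast}T\leq \|T\|_{\infty}^{2}\cdot I$ in $Hom_{\mathcal{A}}^{\ast}(\mathcal{X})$. To obtain it I would first check the $C^{\ast}$-identity at the uniform-bound level, $\|T^{\ast}T\|_{\infty}=\|T\|_{\infty}^{2}$, which is an easy Cauchy--Schwarz computation starting from $\bar{p}_{\mathcal{X}}(T\xi)^{2}=p(\langle T^{\ast}T\xi,\xi\rangle)\leq \bar{p}_{\mathcal{X}}(T^{\ast}T\xi)\bar{p}_{\mathcal{X}}(\xi)$ together with submultiplicativity of $\bar{p}_{\mathcal{X}}$ under composition and the easy bound $\|T^{\ast}\|_{\infty}=\|T\|_{\infty}$; then I would invoke the pro-$C^{\ast}$ analog of the spectral-theoretic fact that a positive self-adjoint element $S$ satisfies $S\leq \|S\|_{\infty}\cdot 1$, applied to $S=T^{\ast}T$ inside the commutative pro-$C^{\ast}$-subalgebra it generates. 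Evaluating the resulting operator inequality at $\xi\in\mathcal{X}$ yields
\[
\langle T\xi,T\xi\rangle = \langle T^{\ast}T\xi,\xi\rangle \leq \|T\|_{\infty}^{2}\langle \xi,\xi\rangle.
\]

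For the lower bound, I apply the inequality just established to $T^{-1}\in Hom_{\mathcal{A}}^{\ast}(\mathcal{X})$ at the vector $T\xi$:
\[
\langle \xi,\xi\rangle = \langle T^{-1}(T\xi),T^{-1}(T\xi)\rangle \leq \|T^{-1}\|_{\infty}^{2}\langle T\xi,T\xi\rangle,
\]
and then multiply both sides by the positive scalar $\|T^{-1}\|_{\infty}^{-2}$, which preserves the order in $\mathcal{A}^{+}$, to obtain $\|T^{-1}\|_{\infty}^{-2}\langle \xi,\xi\rangle \leq \langle T\xi,T\xi\rangle$.

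The hard part will be promoting the scalar seminorm estimate $p(\langle T\xi,T\xi\rangle)\leq \|T\|_{\infty}^{2}\,p(\langle \xi,\xi\rangle)$, which follows immediately from uniform boundedness, to the stronger $\mathcal{A}$-valued order inequality $\langle T\xi,T\xi\rangle\leq \|T\|_{\infty}^{2}\langle \xi,\xi\rangle$. The cleanest workaround is to pass to each quotient $C^{\ast}$-algebra $\mathcal{A}_{p}=\mathcal{A}/\ker p$ and the induced Hilbert $\mathcal{A}_{p}$-module $\mathcal{X}_{p}$, where the inequality $T_{p}^{\ast}T_{p}\leq \|T_{p}\|^{2}\cdot I_{p}\leq \|T\|_{\infty}^{2}\cdot I_{p}$ is a standard Hilbert $C^{\ast}$-module fact, and then to lift back using the characterization that $a\in\mathcal{A}$ is positive if and only if $\pi_{p}(a)\in\mathcal{A}_{p}^{+}$ for every $p\in S(\mathcal{A})$.
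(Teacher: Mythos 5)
The paper does not actually prove this proposition: it is imported verbatim from the cited reference \cite{Azhini}, so there is no in-paper argument to compare against; judged on its own, your proposal is essentially the standard proof of this fact and is sound. The reduction of the lower bound to the upper bound applied to $T^{-1}$ at the vector $T\xi$ is correct, since the hypotheses guarantee $T^{-1}\in Hom_{\mathcal{A}}^{\ast}(\mathcal{X})$ is uniformly bounded and multiplication of an inequality in $\mathcal{A}^{+}$ by the positive scalar $\|T^{-1}\|_{\infty}^{-2}$ is order-preserving. You correctly identify the only real issue, namely upgrading the seminorm estimate to the $\mathcal{A}$-valued inequality, and your localization route handles it properly: $T$ induces a bounded adjointable $T_{p}$ on the Hilbert $\mathcal{A}_{p}$-module $\mathcal{X}_{p}$ with $\|T_{p}\|=\hat{p}(T)\leq\|T\|_{\infty}$, the Hilbert $C^{\ast}$-module inequality $\langle T_{p}x,T_{p}x\rangle\leq\|T_{p}\|^{2}\langle x,x\rangle$ applies, and positivity in $\mathcal{A}$ is detected by all the quotient maps $\pi_{p}$; note that the uniform bound $\|T\|_{\infty}$ (rather than the individual $\hat{p}(T)$, which may vary with $p$) is exactly what makes the lifted inequality hold simultaneously for all $p$, which is why the uniform boundedness hypothesis is essential. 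Your first route is also salvageable but, as written, glosses over a needed lemma: after obtaining $T^{\ast}T\leq\|T\|_{\infty}^{2}I$ in the $C^{\ast}$-algebra of uniformly bounded adjointable operators, ``evaluating at $\xi$'' requires knowing that a positive element $P$ of that algebra satisfies $\langle P\xi,\xi\rangle\geq 0$ (e.g.\ by writing $P=Q^{\ast}Q$ with $Q=P^{1/2}$), so if you keep that route, state this step explicitly; otherwise the quotient argument alone suffices.
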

	
	Similar to $C^{\ast}$-algebra the $\ast$-homomorphism between two pro-$C^{\ast}$-algebra is increasing
	\begin{lemma}\label{5} 
		If $\varphi:\mathcal{A}\longrightarrow\mathcal{B}$ is an $\ast$-homomorphism between pro-$\mathcal{C}^{\ast}$-algebras, then $\varphi$ is increasing, that is, if $a\leq b$, then $\varphi(a)\leq\varphi(b)$.
	\end{lemma}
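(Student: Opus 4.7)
The plan is to mimic the classical $C^{\ast}$-algebra argument: show that $\varphi$ maps the positive cone into the positive cone, and then translate the statement $a\leq b$ into $b-a\in\mathcal{A}^{+}$, apply $\varphi$, and recognise the image as an element of $\mathcal{B}^{+}$.

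First I would invoke the standard characterisation of positivity in a pro-$C^{\ast}$-algebra, namely $\mathcal{A}^{+}=\{c^{\ast}c:c\in\mathcal{A}\}$. This is available because a pro-$C^{\ast}$-algebra decomposes as the Arens--Michael projective limit $\mathcal{A}\cong\varprojlim_{p}\mathcal{A}/\ker p$ of $C^{\ast}$-algebras, so the continuous functional calculus for self-adjoint elements (in particular existence of a positive square root) lifts from each $\mathcal{A}/\ker p$ to $\mathcal{A}$. Any self-adjoint $h\in\mathcal{A}^{+}$ therefore has a square root $h^{1/2}\in\mathcal{A}^{+}$, giving $h=(h^{1/2})^{\ast}h^{1/2}$.

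Given $a\leq b$ in $\mathcal{A}$, write $b-a=c^{\ast}c$ for some $c\in\mathcal{A}$ using the characterisation above. Since $\varphi$ is linear, multiplicative, and $\ast$-preserving, one obtains
\[
\varphi(b)-\varphi(a)=\varphi(b-a)=\varphi(c^{\ast}c)=\varphi(c)^{\ast}\varphi(c),
\]
which lies in $\mathcal{B}^{+}$ by the same characterisation applied in $\mathcal{B}$. This yields $\varphi(a)\leq\varphi(b)$, as required.

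The only delicate point is the first step: asserting $\mathcal{A}^{+}=\{c^{\ast}c:c\in\mathcal{A}\}$ in the pro-$C^{\ast}$ setting, which is not part of the definition recalled in the preliminaries. Once this fact is cited (from \cite{Inoue,Philips}, for instance), the remainder is a one-line computation; no topological or completeness issue intervenes because the argument is purely algebraic once the square-root/positivity characterisation is in hand.
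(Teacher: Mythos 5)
Your argument is correct, and it is worth noting that the paper itself gives no proof of this lemma at all: it is stated bare, justified only by the remark that the situation is ``similar to $C^{\ast}$-algebras.'' Your write-up therefore supplies exactly the missing standard argument. The one step that carries real content is the characterisation $\mathcal{A}^{+}=\{c^{\ast}c : c\in\mathcal{A}\}$, and your justification of it is sound: writing $\mathcal{A}\cong\varprojlim_{p}\mathcal{A}_{p}$ with $\mathcal{A}_{p}$ the $C^{\ast}$-completion of $\mathcal{A}/\ker p$, an element is positive iff its image in each $\mathcal{A}_{p}$ is positive, and the continuous functional calculus produces square roots compatibly with the connecting maps, so a positive element lifts to a global square root; conversely each $c^{\ast}c$ is positive since its image in every $\mathcal{A}_{p}$ is. This is indeed available in the cited sources (Inoue, Phillips), so flagging it as the citation-dependent step is the right call. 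Once it is granted, the computation $\varphi(b)-\varphi(a)=\varphi(c)^{\ast}\varphi(c)\in\mathcal{B}^{+}$ is purely algebraic, and you correctly observe that no continuity of $\varphi$ is needed. No gaps.
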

	
	\section{$\ast$-operator frame for $Hom_{\mathcal{A}}^{\ast}(\mathcal{X})$}
	\begin{definition}
		A family of adjointable operators $\{T_{i}\}_{i\in J}$ on a Hilbert $\mathcal{A}$-module $\mathcal{X}$ over a unital pro-$C^{\ast}$-algebra is said to be an operator frame for $Hom_{\mathcal{A}}^{\ast}(\mathcal{X})$, if there exists positive constants $A, B > 0$ such that 
		\begin{equation}\label{eq3}
			A\langle \xi,\xi\rangle\leq\sum_{i\in J}\langle T_{i}\xi,T_{i}\xi\rangle\leq B\langle \xi,\xi\rangle, \forall \xi\in\mathcal{X}.
		\end{equation}
		The numbers $A$ and $B$ are called lower and upper bounds of the operator frame, respectively. If $A=B=\lambda$, the operator frame is $\lambda$-tight. If $A = B = 1$, it is called a normalized tight operator frame or a Parseval operator frame.
		If only upper inequality of \eqref{eq3} hold, then $\{T_{i}\}_{i\in J}$ is called an operator Bessel sequence for $Hom_{\mathcal{A}}^{\ast}(\mathcal{X})$.
	\end{definition}
	
	\begin{definition}
		A family of adjointable operators $\{T_{i}\}_{i\in I}$ on a Hilbert $\mathcal{A}$-module $\mathcal{X}$ over a pro-$C^{\ast}$-algebra is said to be an $\ast$-operator frame for $Hom_{\mathcal{A}}^{\ast}(\mathcal{X})$, if there exists two strictly nonzero elements $A$ and $B$ in $\mathcal{A}$ such that 
		\begin{equation}\label{eqq33}
			A\langle \xi,\xi\rangle A^{\ast}\leq\sum_{i\in I}\langle T_{i}\xi,T_{i}\xi\rangle\leq B\langle \xi,\xi\rangle B^{\ast}, \forall \xi\in\mathcal{X}.
		\end{equation}
		The elements $A$ and $B$ are called lower and upper bounds of the $\ast$-operator frame, respectively. If $A=B=\lambda$, the $\ast$-operator frame is $\lambda$-tight. If $A = B = 1_{\mathcal{A}}$, it is called a normalized tight $\ast$-operator frame or a Parseval $\ast$-operator frame. If only upper inequality of \eqref{eqq33} hold, then $\{T_{i}\}_{i\in i}$ is called an $\ast$-operator Bessel sequence for $Hom_{\mathcal{A}}^{\ast}(\mathcal{X})$.
	\end{definition}
	We mentioned that the set of all of operator frames for $Hom_{\mathcal{A}}^{\ast}(\mathcal{X})$ can be considered
	as a subset of $\ast$-operator frame. To illustrate this, let $\{T_{j}\}_{i\in I}$ be an operator frame for Hilbert $\mathcal{A}$-module $\mathcal{X}$
	with operator frame real bounds $A$ and $B$. Note that for $\xi\in\mathcal{X}$,
	\begin{equation*}
		(\sqrt{A})1_{\mathcal{A}}\langle \xi,\xi\rangle(\sqrt{A})1_{\mathcal{A}}\leq\sum_{i\in I}\langle T_{i}\xi,T_{i}\xi\rangle\leq(\sqrt{B})1_{\mathcal{A}}\langle \xi,\xi\rangle(\sqrt{B})1_{\mathcal{A}}.
	\end{equation*}
	Therefore, every operator frame for $Hom_{\mathcal{A}}^{\ast}(\mathcal{X})$ with real bounds $A$ and $B$ is an $\ast$-operator frame for $Hom_{\mathcal{A}}^{\ast}(\mathcal{X})$ with $\mathcal{A}$-valued $\ast$-operator frame bounds $(\sqrt{A})1_{\mathcal{A}}$ and $(\sqrt{B})1_{\mathcal{B}}$.
	\begin{example}
		Let $\mathcal{A}$ be a Hilbert pro-$C^{\ast}$-module over itself with the inner product $\langle a,b\rangle=ab^{\ast}$.
		Let $\{\xi_{i}\}_{i\in I}$ be an $\ast$-frame for $\mathcal{A}$ with bounds $A$ and $B$, respectively. For each $i\in I$, we define $T_{i}:\mathcal{A}\to\mathcal{A}$ by $T_{i}\xi=\langle \xi,\xi_{i}\rangle,\;\; \forall \xi\in\mathcal{A}$. $T_{i}$ is adjointable and $T_{i}^{\ast}a=a\xi_{i}$ for each $a\in\mathcal{A}$. And we have 
		\begin{equation*}
			A\langle \xi,\xi\rangle A^{\ast}\leq\sum_{i\in I}\langle \xi,\xi_{i}\rangle\langle \xi_{i},\xi\rangle\leq B\langle \xi,\xi\rangle B^{\ast}, \forall \xi\in\mathcal{A}.
		\end{equation*}
		Then
		\begin{equation*}
			A\langle \xi,\xi\rangle A^{\ast}\leq\sum_{i\in I}\langle T_{i}\xi,T_{i}\xi\rangle\leq B\langle \xi,\xi\rangle B^{\ast}, \forall \xi\in\mathcal{A}.
		\end{equation*}
		So $\{T_{i}\}_{i\in I}$ is an $\ast$-operator frame in $\mathcal{A}$ with bounds $A$ and $B$, respectively.
	\end{example}
	Similar to $\ast$-frames, we introduce the $\ast$-operator frame transform and $\ast$-frame operator and establish some properties.
	
	\begin{theorem} \label{2.3}
		Let $\{T_{i}\}_{i\in I}\subset Hom_{\mathcal{A}}^{\ast}(\mathcal{X})$ be an $\ast$-operator frame with lower and upper bounds $A$ and $B$, respectively. The $\ast$-operator frame transform $R:\mathcal{X}\rightarrow l^{2}(\mathcal{X})$ defined by $R\xi=\{T_{i}\xi\}_{i\in I}$ is injective and closed range adjointable $\mathcal{A}$-module map and $\bar{p}_{\mathcal{X}}(R)\leq \bar{p}_{\mathcal{X}}(B)$. The adjoint operator $R^{\ast}$ is surjective and it is given by $R^{\ast}(\{\xi_{i}\}_{i\in I})=\sum_{i\in I}T_{i}^{\ast}\xi_{i}$ for all $\{\xi_{i}\}_{i\in I}$ in $l^{2}(\mathcal{X})$.
	\end{theorem}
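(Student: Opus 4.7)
The plan is to verify each assertion in sequence, relying only on the frame inequality \eqref{eqq33} and elementary properties of the continuous $C^{\ast}$-seminorms $p\in S(\mathcal{A})$.

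First I would dispatch well-definedness, boundedness, and $\mathcal{A}$-linearity of $R$ together. The Bessel-side inequality in \eqref{eqq33} forces the series $\sum_{i\in I}\langle T_i\xi,T_i\xi\rangle$ to converge in $\mathcal{A}$, so $R\xi=\{T_i\xi\}_{i\in I}$ lies in $l^{2}(\mathcal{X})$. Applying an arbitrary $p\in S(\mathcal{A})$ and using $p(BcB^{\ast})\leq p(B)^{2}p(c)$ for positive $c$ (which follows from Lemma \ref{5} together with the $C^{\ast}$-identity), then taking square roots, produces the claimed seminorm estimate. The $\mathcal{A}$-module map property is inherited from each $T_i$. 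For injectivity I would appeal to the lower bound: $R\xi=0$ implies $A\langle \xi,\xi\rangle A^{\ast}\leq 0$, and because $\langle \xi,\xi\rangle\geq 0$ while $A$ is strictly nonzero this forces $\xi=0$. The same inequality controls $\bar{p}(\xi)$ by $\bar{p}_{l^2(\mathcal{X})}(R\xi)$ uniformly in $p$, so $R$ is bounded below in every seminorm and hence has closed range.

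For the adjoint, I propose $S(\{\xi_i\})=\sum_i T_i^{\ast}\xi_i$ and must first establish convergence in $\mathcal{X}$ for any $\{\xi_i\}\in l^{2}(\mathcal{X})$. My approach is to test partial sums against an arbitrary $\eta\in\mathcal{X}$, writing $\langle \eta,\sum_{i\in F}T_i^{\ast}\xi_i\rangle=\sum_{i\in F}\langle T_i\eta,\xi_i\rangle$, and then apply the Cauchy--Schwarz inequality for the $\mathcal{A}$-valued inner product on $l^{2}(\mathcal{X})$ together with the Bessel bound on $\{T_i\}$ to show the net of partial sums is Cauchy in each seminorm $\bar{p}_{\mathcal{X}}$. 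Once $S$ is defined, the identity $\langle R\xi,\{\xi_i\}\rangle=\sum_i\langle T_i\xi,\xi_i\rangle=\sum_i\langle \xi,T_i^{\ast}\xi_i\rangle=\langle \xi,S(\{\xi_i\})\rangle$ identifies $R^{\ast}$ with $S$.

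Finally, for surjectivity of $R^{\ast}$, the sandwich $A\langle \xi,\xi\rangle A^{\ast}\leq\langle R^{\ast}R\xi,\xi\rangle\leq B\langle \xi,\xi\rangle B^{\ast}$ shows the positive operator $R^{\ast}R$ is coercive in every seminorm, hence invertible (by the standard Hilbert $C^{\ast}$-module argument extended seminorm by seminorm, and compatible with Proposition \ref{Prop2.6}). The identity $\xi=R^{\ast}\bigl(R(R^{\ast}R)^{-1}\xi\bigr)$ then exhibits every $\xi\in\mathcal{X}$ as lying in $\mathrm{range}(R^{\ast})$. I expect the chief obstacle to be the convergence of $\sum T_i^{\ast}\xi_i$ and the accompanying adjointness check, since the pro-$C^{\ast}$ setting requires one to track estimates uniformly across the entire family $S(\mathcal{A})$ rather than relying on a single $C^{\ast}$-norm.
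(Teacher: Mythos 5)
Your proposal is correct and follows essentially the same route as the paper: the Bessel side of \eqref{eqq33} gives well-definedness and the seminorm bound $\bar{p}_{\mathcal{X}}(R)\leq\bar{p}_{\mathcal{X}}(B)$, the lower bound gives injectivity and closed range (bounded below in each seminorm, which is the paper's Cauchy-sequence argument in disguise), the inner-product computation identifies $R^{\ast}(\{\xi_i\})=\sum_i T_i^{\ast}\xi_i$, and surjectivity of $R^{\ast}$ via invertibility of $R^{\ast}R$ is exactly what the paper obtains from injectivity and closed range of $R$ (its Lemma \ref{2.8}). Your added care about convergence of $\sum_i T_i^{\ast}\xi_i$ is a detail the paper glosses over, but it does not alter the argument.
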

	\begin{proof}
		By the definition of norm in $l^{2}(\mathcal{X})$
		\begin{equation}\label{3.3}
			\bar{p}_{\mathcal{X}}(R\xi)^{2}=p(\sum_{i\in I}\langle T_{i}\xi,T_{i}\xi\rangle)\leq \bar{p}_{\mathcal{X}}(B)^{2}p(\langle \xi,\xi\rangle), \forall \xi\in\mathcal{X}.
		\end{equation}
		This inequality implies that $R$ is well defined and $\bar{p}_{\mathcal{X}}(R)\leq \bar{p}_{\mathcal{X}}(B)$. Clearly, $R$ is a linear $\mathcal{A}$-module map. We now show that the range of $R$ is closed. Let $\{R\xi_{n}\}_{n\in\mathbb{N}}$ be a sequence in the range of $R$ such that $\lim_{n\to\infty}R\xi_{n}=\eta$. For $n, m\in\mathbb{N}$, we have
		\begin{equation*}
			p(A\langle \xi_{n}-\xi_{m},\xi_{n}-\xi_{m}\rangle A^{\ast})\leq p(\langle R(\xi_{n}-\xi_{m}),R(\xi_{n}-\xi_{m})\rangle)=\bar{p}_{\mathcal{X}}(R(\xi_{n}-\xi_{m}))^{2}.
		\end{equation*}
		Since $\{R\xi_{n}\}_{n\in\mathbb{N}}$ is Cauchy sequence in $\mathcal{X}$, then
		
		$p(A\langle \xi_{n}-\xi_{m},\xi_{n}-\xi_{m}\rangle A^{\ast})\rightarrow0$, as $n,m\rightarrow\infty.$
		
		Note that for $n, m\in\mathbb{N}$,
		\begin{align*}
			p(\langle \xi_{n}-\xi_{m},\xi_{n}-\xi_{m}\rangle)&=p(A^{-1}A\langle \xi_{n}-\xi_{m},\xi_{n}-\xi_{m}\rangle A^{\ast}(A^{\ast})^{-1})\\
			&\leq p(A^{-1})^{2} p(A\langle \xi_{n}-\xi_{m},\xi_{n}-\xi_{m}\rangle A^{\ast}).
		\end{align*}
		Therefore the sequence $\{\xi_{n}\}_{n\in\mathbb{N}}$ is Cauchy and hence there exists $\xi\in \mathcal{X}$ such that $\xi_{n}\rightarrow \xi$ as $n\rightarrow\infty$. Again by \eqref{3.3}, we have 
		$$\bar{p}_{\mathcal{X}}(R(\xi_{n}-\xi_{m}))^{2}\leq \bar{p}_{\mathcal{X}}(B)^{2} p(\langle \xi_{n}-\xi,\xi_{n}-\xi\rangle).$$
		
		Thus $p(R\xi_{n}-R\xi)\rightarrow0$ as $n\rightarrow\infty$ implies that $R\xi=\eta$. It concludes that the range of $R$ is closed. Next we show that $R$ is injective. Suppose that $\xi\in\mathcal{X}$ and $R\xi=0$. Note that $A\langle \xi,\xi\rangle A^{\ast}\leq\langle R\xi,R\xi\rangle$ then $\langle \xi,\xi\rangle=0$ so $\xi=0$ i.e. $R$ is injective.
		
		For $\xi\in\mathcal{X}$ and $\{\xi_{i}\}_{i\in I}\in l^{2}(\mathcal{X})$ we have $$\langle R\xi, \{\xi_{i}\}_{i\in I}\rangle=\langle \{T_{i}\xi\}_{i\in I}, \{\xi_{i}\}_{i\in I}\rangle=\sum_{i\in I}\langle T_{i}\xi, \xi_{i}\rangle=\sum_{i\in I}\langle \xi, T_{i}^{\ast}\xi_{i}\rangle=\langle \xi, \sum_{i\in I}T_{i}^{\ast}\xi_{i}\rangle.$$ 
		Then $R^{\ast}(\{\xi_{i}\}_{i\in I})=\sum_{i\in I}T_{i}^{\ast}\xi_{i}$. By injectivity of $R$, the operator $R^{\ast}$ has closed range and $\mathcal{X}=range(R^{\ast})$, which completes the proof.
	\end{proof}
	Now we define $\ast$-frame operator and studies some of its properties.
	\begin{definition}
		Let $\{T_{i}\}_{i\in I}\subset Hom_{\mathcal{A}}^{\ast}(\mathcal{X})$ be an $\ast$-operator frame with $\ast$-operator frame transform $R$ and lower and upper bounds $A$ and $B$, respectively. The $\ast$-frame operator $S:\mathcal{X}\to\mathcal{X}$ is defined by $S\xi=R^{\ast}R\xi=\sum_{i\in I}T_{i}^{\ast}T_{i}\xi,\;\;\forall \xi\in\mathcal{X}$.
	\end{definition}
	The following lemma is used to prove the next results
	\begin{lemma}\label{2.8} Let $\mathcal{X}$ and $\mathcal{Y}$ be two Hilbert $\mathcal{A}$-modules and $T\in Hom_{\mathcal{A}}^{\ast}(\mathcal{X},\mathcal{Y})$.
		\begin{itemize}
			\item [(i)] If $T$ is injective and $T$ has closed range, then the adjointable map $T^{\ast}T$ is invertible and $$\bar{p}_{\mathcal{X}}(T^{\ast}T^{-1})^{-1}I_{\mathcal{X}}\leq T^{\ast}T\leq \bar{p}_{\mathcal{X}}(T)^{2}I_{\mathcal{X}}.$$
			\item  [(ii)]	If $T$ is surjective, then the adjointable map $TT^{\ast}$ is invertible and $$\bar{p}_{\mathcal{X}}((TT^{\ast})^{-1})^{-1}I_{\mathcal{Y}}\leq TT^{\ast}\leq \bar{p}_{\mathcal{X}}(T)^{2}I_{\mathcal{Y}}.$$
		\end{itemize}	
	\end{lemma}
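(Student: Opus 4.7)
My plan is to prove part (i) from first principles and then deduce (ii) by applying (i) to $T^{\ast}$. The two ingredients I need are: (a) invertibility of $T^{\ast}T$ when $T$ is injective with closed range, which rests on orthogonal-complementation and closed-range duality for adjointable maps between Hilbert pro-$C^{\ast}$-modules; and (b) conversion of seminorm bounds on positive invertible operators into two-sided operator inequalities, via the order-theoretic facts packaged in Proposition 2.1.

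For (i), I would first dispatch injectivity of $T^{\ast}T$: if $T^{\ast}T\xi = 0$, then $\langle T\xi,T\xi\rangle=\langle T^{\ast}T\xi,\xi\rangle=0$, hence $T\xi=0$, hence $\xi=0$ since $T$ is injective. For surjectivity, the closed-range hypothesis yields the orthogonal splitting $\mathcal{X}=\ker T\oplus\mathrm{ran}(T^{\ast})$ with $\mathrm{ran}(T^{\ast})$ already closed (this is the pro-$C^{\ast}$ version of Lance's closed-range theorem for adjointable operators); combined with $\ker T=0$ this gives $\mathcal{X}=\mathrm{ran}(T^{\ast})$. Since $T^{\ast}(\mathrm{ran}\,T)=\mathrm{ran}(T^{\ast}T)$ and $T$ being injective with closed range forces $\mathrm{ran}(T^{\ast}T)=\mathrm{ran}(T^{\ast})=\mathcal{X}$, we conclude $T^{\ast}T$ is invertible. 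The upper bound then follows from the standard adjointable-operator inequality $T^{\ast}T\leq \bar{p}_{\mathcal{X}}(T)^{2}I_{\mathcal{X}}$, a direct consequence of positivity of $\bar{p}_{\mathcal{X}}(T)^{2}I_{\mathcal{X}}-T^{\ast}T$ together with $\bar{p}_{\mathcal{X}}(T^{\ast}T)=\bar{p}_{\mathcal{X}}(T)^{2}$ in each defining seminorm.

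For the lower bound, $(T^{\ast}T)^{-1}$ is positive and bounded, so $(T^{\ast}T)^{-1}\leq \bar{p}_{\mathcal{X}}\bigl((T^{\ast}T)^{-1}\bigr)I_{\mathcal{X}}$; inverting via Proposition 2.1(5) gives $\bar{p}_{\mathcal{X}}((T^{\ast}T)^{-1})^{-1}I_{\mathcal{X}}\leq T^{\ast}T$, which is the claimed lower estimate. Part (ii) is then immediate: if $T$ is surjective, the same closed-range duality forces $T^{\ast}$ to be injective with closed range, so applying (i) to $T^{\ast}$ in place of $T$ produces the desired statement for $TT^{\ast}$, using $\bar{p}(T^{\ast})=\bar{p}(T)$. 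The main obstacle I expect is the closed-range duality step: orthogonal complementation for adjointable operators is not automatic in the module setting as it is in Hilbert spaces, but the standard Hilbert $C^{\ast}$-module argument (Lance) extends seminorm-by-seminorm to the pro-$C^{\ast}$ setting, and in the write-up I would invoke the relevant pro-$C^{\ast}$ reference rather than reconstruct the proof.
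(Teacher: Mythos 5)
Your proposal follows essentially the same route as the paper's proof: establish bijectivity of $T^{\ast}T$ via the orthogonal decompositions coming from the closed-range hypothesis, obtain the two-sided bound from positivity of $T^{\ast}T$ and $(T^{\ast}T)^{-1}$ together with the order-reversal property for inverses, and deduce (ii) by applying (i) to $T^{\ast}$. If anything you are more explicit than the paper, which starts from the surjectivity of $T^{\ast}$ and the splitting $\mathcal{Y}=\ker T^{\ast}\oplus \operatorname{Im}T$ without comment, whereas you correctly flag that this closed-range/orthogonal-complementation step is the point requiring a citation in the pro-$C^{\ast}$-module setting.
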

	\begin{proof}

		\begin{enumerate}
			\item\label{1}  Since the adjointable map $T^{*}$ is surjective, it follows that for any $\xi \in \mathcal{X}$, there exists $\eta \in \mathcal{Y}$ such that $T^{*} \eta=\xi$. Since $\mathcal{Y}= ker T^{*} \oplus \operatorname{Im} T$, it follows that $\eta=\eta_{1}+T h$, for some $\eta_{1} \in \operatorname{ker} T^{*}$ and some $h \in \mathcal{X}$. Thus, $\xi=T^{*}\left(\eta_{1}+T h\right)=T^{*} T h$, and hence $T^{*} T$ is surjective.	If $T^{*} T \xi=0$, then $T \xi \in \operatorname{ker} T^{*} \cap \operatorname{Im} T=$ $\{0\}$, which implies that $\xi=0$. therefore, $T^{\ast} T$ is an injective positive map. Hence, $T^{\ast} T$ is an invertible element of the set of all bounded $\mathcal{A}$-module maps, $0 \leq\left(T^{*} T\right)^{-1} \leq \bar{p}_{\mathcal{X}}(\left(T^{*} T\right)^{-1})$ and $0 \leq\left(T^{*} T\right) \leq \bar{p}_{\mathcal{X}}(\left(T^{*} T\right)) .$ Therefore $ \bar{p}_{\mathcal{X}}((T^{*} T)^{-1})^{-1} \leq T^{*} T \leq \bar{p}_{\mathcal{X}}(T)^{2}$
			\item Let $T$ be surjective. Then, $T^{\ast}$ is injective and $T^{\ast}$ has a closed range. By substituting $T^{\ast}$ for $T$ in (\ref{1}), we have $T T^{*}$ is invertible and $\bar{p}_{\mathcal{X}}(\left(T T^{*}\right)^{-1})^{-1} \leq T T^{*} \leq \bar{p}_{\mathcal{X}}(T)^{2}$
		\end{enumerate}
	\end{proof}
	\begin{theorem}
		The $\ast$-operator frame $S$ is bounded, positive, self-adjoint, invertible and $\bar{p}_{\mathcal{X}}(A^{-1})^{-2}\leq\bar{p}_{\mathcal{X}}(S)\leq \bar{p}_{\mathcal{X}}(B)^{2}$.
	\end{theorem}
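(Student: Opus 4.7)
The plan is to exploit the factorisation $S=R^{\ast}R$, where $R$ is the $\ast$-operator frame transform supplied by Theorem \ref{2.3}. Self-adjointness then falls out of $(R^{\ast}R)^{\ast}=R^{\ast}R$, and positivity from the identity $\langle S\xi,\xi\rangle=\langle R\xi,R\xi\rangle\geq 0$. Boundedness is equally cheap: Theorem \ref{2.3} records $\bar{p}_{\mathcal{X}}(R)\leq \bar{p}_{\mathcal{X}}(B)$, so $\bar{p}_{\mathcal{X}}(S\xi)\leq \bar{p}_{\mathcal{X}}(R^{\ast})\,\bar{p}_{\mathcal{X}}(R\xi)\leq \bar{p}_{\mathcal{X}}(B)^{2}\bar{p}_{\mathcal{X}}(\xi)$, yielding at once the upper estimate $\bar{p}_{\mathcal{X}}(S)\leq \bar{p}_{\mathcal{X}}(B)^{2}$.

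For invertibility I would invoke Lemma \ref{2.8}(i). Theorem \ref{2.3} asserts exactly that $R$ is injective and has closed range, which are precisely the hypotheses of that lemma; the conclusion is that $R^{\ast}R=S$ is invertible in $Hom_{\mathcal{A}}^{\ast}(\mathcal{X})$. This also re-confirms positivity and gives a ready supply of $S^{-1}$ for use elsewhere in the paper.

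The remaining task is the lower estimate $\bar{p}_{\mathcal{X}}(A^{-1})^{-2}\leq \bar{p}_{\mathcal{X}}(S)$. Starting from the defining inequality $A\langle\xi,\xi\rangle A^{\ast}\leq \langle S\xi,\xi\rangle$, I would insert $A^{-1}$ on the left and $(A^{\ast})^{-1}$ on the right exactly as in the proof of Theorem \ref{2.3}, writing
\[
p(\langle\xi,\xi\rangle)=p\bigl(A^{-1}\,A\langle\xi,\xi\rangle A^{\ast}\,(A^{\ast})^{-1}\bigr)\leq p(A^{-1})^{2}\,p(\langle S\xi,\xi\rangle),
\]
and then absorb $p(\langle S\xi,\xi\rangle)$ using the Cauchy--Schwarz inequality in the form $p(\langle S\xi,\xi\rangle)\leq \bar{p}_{\mathcal{X}}(S\xi)\,\bar{p}_{\mathcal{X}}(\xi)$. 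Dividing by $\bar{p}_{\mathcal{X}}(\xi)$ and taking the supremum over unit vectors produces $\bar{p}_{\mathcal{X}}(S)\geq \bar{p}_{\mathcal{X}}(A^{-1})^{-2}$. The delicate point, and what I expect to be the main obstacle, is verifying that Lemma \ref{5} and Proposition \ref{Prop2.6} do enough work in the pro-$C^{\ast}$ setting to legitimise these manipulations — specifically that the seminorm $p$ respects the order relation after multiplication by $A^{-1}$, and that the Cauchy--Schwarz inequality for the $\mathcal{A}$-valued inner product survives under an arbitrary continuous $C^{\ast}$-seminorm. Both facts are folklore but should be stated cleanly before running the calculation above.
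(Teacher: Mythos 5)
Your proposal is correct and follows essentially the same route as the paper: the factorisation $S=R^{\ast}R$ together with Theorem \ref{2.3} and Lemma \ref{2.8} gives self-adjointness, positivity and invertibility, and the defining frame inequality plus the seminorm manipulation with $A^{-1}$ and Cauchy--Schwarz yields the two-sided estimate on $\bar{p}_{\mathcal{X}}(S)$. The only cosmetic differences are that the paper checks self-adjointness by a direct series computation rather than via $(R^{\ast}R)^{\ast}=R^{\ast}R$, and obtains the upper bound by taking the supremum of $p(\langle S\xi,\xi\rangle)$ over $\bar{p}_{\mathcal{X}}(\xi)\leq 1$ instead of through submultiplicativity with $R$.
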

	\begin{proof}
		
		By definition we have, $\forall \xi, \eta \in\mathcal{X}$:
		\begin{align*}
			\langle S\xi,\eta\rangle&=\left\langle\sum_{i\in I}T_{i}^{\ast}T_{i}\xi,\eta\right\rangle\\
			&=\sum_{i\in I}\langle T_{i}^{\ast}T_{i}\xi,\eta\rangle\\
			&=\sum_{i\in I}\langle \xi,T_{i}^{\ast}T_{i}\eta\rangle \\
			&=\left\langle \xi,\sum_{i\in I}T_{i}^{\ast}T_{i}\eta\right\rangle\\
			&=\langle \xi,S\eta\rangle.
		\end{align*}
		Then $S$ is a selfadjoint.
		
		By Lemma \ref{2.8} and Theorem \ref{2.3}, $S$ is invertible. Clearly $S$ is positive.
		
		By definition of an $\ast$-operator frame we have
		\begin{equation*}
			A\langle \xi,\xi\rangle A^{\ast}\leq\sum_{i\in I}\langle T_{i}\xi,T_{i}\xi\rangle\leq B\langle \xi,\xi\rangle B^{\ast}.
		\end{equation*}
		So
		\begin{equation*}
			A\langle \xi,\xi\rangle A^{\ast}\leq\langle S\xi,\xi\rangle\leq B\langle \xi,\xi\rangle B^{\ast}.
		\end{equation*}
		This give
		\begin{equation*}
			\bar{p}_{\mathcal{X}}(A^{-1})^{-2}\bar{p}_{\mathcal{X}}(\xi)^{2}\leq\bar{p}_{\mathcal{X}}(\langle S\xi,\xi\rangle)\leq \bar{p}_{\mathcal{X}}(B)^{2}\bar{p}_{\mathcal{X}}(\xi)^{2}, \forall \xi\in\mathcal{X}.
		\end{equation*}
		If we take supremum on all $\xi\in\mathcal{X}$, where $\bar{p}_{\mathcal{X}}(\xi)\leq1$, then $\bar{p}_{\mathcal{X}}(A^{-1})^{-2}\leq \bar{p}_{\mathcal{X}}(S)\leq \bar{p}_{\mathcal{X}}(B)^{2}$.
		
	\end{proof}
	\begin{corollary}
		Let $\{T_{i}\}_{i\in I}\subset Hom_{\mathcal{A}}^{\ast}(\mathcal{X})$ be an $\ast$-operator frame with $\ast$-operator frame transform $R$ and lower and upper bounds $A$ and $B$, respectively. Then $\{T_{i}\}_{i\in I}$ is an operator frame for $\mathcal{X}$ with lower and upper bounds $\bar{p}_{\mathcal{X}}((R^{\ast}R)^{-1})^{-1}$ and $\bar{p}_{\mathcal{X}}(R)^{2}$, respectively.
	\end{corollary}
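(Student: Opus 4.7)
The plan is to show that the frame operator $S = R^{\ast}R$ is sandwiched, as an adjointable operator on $\mathcal{X}$, between two scalar multiples of the identity, and then to convert this operator inequality into the desired (scalar-bound) operator frame inequality in the sense of \eqref{eq3}.

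First I would invoke Theorem \ref{2.3} to conclude that the $\ast$-operator frame transform $R\colon \mathcal{X}\to \ell^{2}(\mathcal{X})$ is an injective adjointable $\mathcal{A}$-module map with closed range. This is exactly the hypothesis needed to apply Lemma \ref{2.8}(i) to $R$, which yields
\begin{equation*}
\bar{p}_{\mathcal{X}}\bigl((R^{\ast}R)^{-1}\bigr)^{-1}\, I_{\mathcal{X}} \;\leq\; R^{\ast}R \;\leq\; \bar{p}_{\mathcal{X}}(R)^{2}\, I_{\mathcal{X}}
\end{equation*}
as an inequality of adjointable operators on $\mathcal{X}$.

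Next I would apply this operator inequality to an arbitrary $\xi\in\mathcal{X}$ by taking the $\mathcal{A}$-valued inner product with $\xi$, which gives
\begin{equation*}
\bar{p}_{\mathcal{X}}\bigl((R^{\ast}R)^{-1}\bigr)^{-1}\, \langle \xi,\xi\rangle \;\leq\; \langle R^{\ast}R\xi,\xi\rangle \;\leq\; \bar{p}_{\mathcal{X}}(R)^{2}\, \langle \xi,\xi\rangle .
\end{equation*}
The middle term can be rewritten as $\langle R\xi, R\xi\rangle = \sum_{i\in I}\langle T_{i}\xi, T_{i}\xi\rangle$ by the definition of the frame transform $R$ and the inner product on $\ell^{2}(\mathcal{X})$. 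Substituting this identification produces precisely the defining inequality \eqref{eq3} for an operator frame with scalar bounds $\bar{p}_{\mathcal{X}}((R^{\ast}R)^{-1})^{-1}$ and $\bar{p}_{\mathcal{X}}(R)^{2}$, which is what was to be shown.

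There is no genuine obstacle here; the corollary is essentially a bookkeeping consequence of the two preceding results. The only mild subtlety to be careful about is that Lemma \ref{2.8}(i) produces scalar multiples of $I_{\mathcal{X}}$ (so the bounds end up being honest positive real numbers), which is exactly what distinguishes an ordinary operator frame from the $\ast$-operator frame one starts with; the argument converts $\mathcal{A}$-valued bounds $A,B$ into scalar bounds by absorbing them into the operator norm estimates supplied by Lemma \ref{2.8}.
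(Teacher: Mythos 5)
Your proposal is correct and follows the paper's own argument exactly: invoke Theorem \ref{2.3} to get that $R$ is injective with closed range, apply Lemma \ref{2.8}(i) to obtain $\bar{p}_{\mathcal{X}}((R^{\ast}R)^{-1})^{-1}I_{\mathcal{X}}\leq R^{\ast}R\leq \bar{p}_{\mathcal{X}}(R)^{2}I_{\mathcal{X}}$, and then identify $\langle R^{\ast}R\xi,\xi\rangle=\sum_{i\in I}\langle T_{i}\xi,T_{i}\xi\rangle$ to read off the operator frame inequality. No differences worth noting.
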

	\begin{proof}
		By Theorem \ref{2.3}, $R$ is injective and has closed range and by Lemma \ref{2.8}
		$$\bar{p}_{\mathcal{X}}((R^{\ast}R)^{-1})^{-1}I_{\mathcal{X}}\leq R^{\ast}R\leq \bar{p}_{\mathcal{X}}(R)^{2}I_{\mathcal{X}}.$$
		So$$\bar{p}_{\mathcal{X}}((R^{\ast}R)^{-1})^{-1}\langle \xi, \xi\rangle\leq \sum_{i\in I}\langle T_{i}\xi, T_{i}\xi\rangle\leq\bar{p}_{\mathcal{X}}(R)^{2}\langle \xi, \xi\rangle,\;\;\forall x\in\mathcal{X}.$$
		Then $\{T_{i}\}_{i\in I}$ is an operator frame for $\mathcal{X}$ with lower and upper bounds $\bar{p}_{\mathcal{X}}((R^{\ast}R)^{-1})^{-1}$ and $\bar{p}_{\mathcal{X}}(R)^{2}$, respectively.
	\end{proof}	
	
	\begin{theorem} \label{Th3.9}
		Let $\{T_{i}\}_{i\in I}\subset Hom_{\mathcal{A}}^{\ast}(\mathcal{X})$ be an $\ast$-operator frame for $\mathcal{X}$, with lower and upper bounds $A$ and $B$, respectively and with $\ast$-frame operator $S$. Let $\theta\in Hom_{\mathcal{A}}^{\ast}(\mathcal{X})$ be injective and has a closed range. Then $\{T_{i}\theta\}_{i\in I}$ is an $\ast$-operator frame for $\mathcal{X}$ with $\ast$-frame operator $\theta^{\ast}S\theta$ with bounds $\bar{p}_{\mathcal{X}}((\theta^{\ast}\theta)^{-1})^{-\frac{1}{2}}A$, $\bar{p}_{\mathcal{X}}(\theta) B$.
	\end{theorem}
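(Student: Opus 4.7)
The plan is to prove the inequality defining a $\ast$-operator frame for $\{T_i\theta\}_{i\in I}$ by applying the hypothesis on $\{T_i\}$ to the vector $\theta\xi$, and then sandwiching $\langle\theta\xi,\theta\xi\rangle$ between scalar multiples of $\langle\xi,\xi\rangle$ using the operator-level bounds on $\theta^{\ast}\theta$ provided by Lemma \ref{2.8}. First I would observe that, since $\{T_i\}_{i\in I}$ is a $\ast$-operator frame, applying \eqref{eqq33} to the vector $\theta\xi\in\mathcal{X}$ yields
\begin{equation*}
A\langle\theta\xi,\theta\xi\rangle A^{\ast}\;\leq\;\sum_{i\in I}\langle T_i\theta\xi,T_i\theta\xi\rangle\;\leq\; B\langle\theta\xi,\theta\xi\rangle B^{\ast}.
\end{equation*}
So the whole proof reduces to controlling $\langle\theta\xi,\theta\xi\rangle=\langle\theta^{\ast}\theta\xi,\xi\rangle$ from above and below in terms of $\langle\xi,\xi\rangle$.

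For the upper bound, since $\theta$ is adjointable and uniformly bounded, Proposition \ref{Prop2.6} (or equivalently the second inequality in Lemma \ref{2.8}(i)) gives $\langle\theta\xi,\theta\xi\rangle\leq\bar{p}_{\mathcal{X}}(\theta)^{2}\langle\xi,\xi\rangle$. Using that $\bar{p}_{\mathcal{X}}(\theta)^{2}$ is a positive scalar which commutes with everything, I would rewrite
\begin{equation*}
B\langle\theta\xi,\theta\xi\rangle B^{\ast}\;\leq\;\bar{p}_{\mathcal{X}}(\theta)^{2}\,B\langle\xi,\xi\rangle B^{\ast}\;=\;(\bar{p}_{\mathcal{X}}(\theta)B)\langle\xi,\xi\rangle(\bar{p}_{\mathcal{X}}(\theta)B)^{\ast},
\end{equation*}
which yields the required upper bound $\bar{p}_{\mathcal{X}}(\theta)B$. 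For the lower bound, because $\theta$ is injective with closed range, Lemma \ref{2.8}(i) gives $\theta^{\ast}\theta\geq\bar{p}_{\mathcal{X}}((\theta^{\ast}\theta)^{-1})^{-1}I_{\mathcal{X}}$, so pairing with $\xi$ yields $\langle\theta\xi,\theta\xi\rangle\geq\bar{p}_{\mathcal{X}}((\theta^{\ast}\theta)^{-1})^{-1}\langle\xi,\xi\rangle$. Multiplying on the left by $A$ and on the right by $A^{\ast}$ and extracting the positive scalar $\bar{p}_{\mathcal{X}}((\theta^{\ast}\theta)^{-1})^{-1/2}$, I obtain
\begin{equation*}
A\langle\theta\xi,\theta\xi\rangle A^{\ast}\;\geq\;\bigl(\bar{p}_{\mathcal{X}}((\theta^{\ast}\theta)^{-1})^{-1/2}A\bigr)\langle\xi,\xi\rangle\bigl(\bar{p}_{\mathcal{X}}((\theta^{\ast}\theta)^{-1})^{-1/2}A\bigr)^{\ast},
\end{equation*}
giving the required lower bound.

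Finally, to identify the $\ast$-frame operator of $\{T_i\theta\}_{i\in I}$, I would just compute: for $\xi\in\mathcal{X}$, the Bessel condition just established ensures that $\sum_{i\in I}(T_i\theta)^{\ast}(T_i\theta)\xi=\sum_{i\in I}\theta^{\ast}T_i^{\ast}T_i\theta\xi$ converges, and since $\theta^{\ast}$ is continuous and $\mathcal{A}$-linear it commutes with the sum, giving $\theta^{\ast}\bigl(\sum_{i\in I}T_i^{\ast}T_i\bigr)\theta\xi=\theta^{\ast}S\theta\xi$, as claimed.

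The main obstacle, modest as it is, is bookkeeping: making sure the scalar factors $\bar{p}_{\mathcal{X}}(\theta)$ and $\bar{p}_{\mathcal{X}}((\theta^{\ast}\theta)^{-1})^{-1/2}$ can legitimately be absorbed into the $\mathcal{A}$-valued frame bounds $A$ and $B$. This rests on the fact that they are nonnegative real numbers (hence central in $\mathcal{A}$ and commute with both $\langle\xi,\xi\rangle$ and with $A,B$), so that they factor through the sandwich $A(\cdot)A^{\ast}$ as their square roots without disturbing the $\ast$-structure.
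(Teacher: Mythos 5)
Your proposal is correct and follows essentially the same route as the paper: apply the frame inequality to $\theta\xi$, use Lemma \ref{2.8} to bound $\langle\theta\xi,\theta\xi\rangle$ below by $\bar{p}_{\mathcal{X}}((\theta^{\ast}\theta)^{-1})^{-1}\langle\xi,\xi\rangle$ and above by $\bar{p}_{\mathcal{X}}(\theta)^{2}\langle\xi,\xi\rangle$, absorb the scalars into the bounds, and compute $\theta^{\ast}S\theta\xi=\sum_{i\in I}(T_{i}\theta)^{\ast}(T_{i}\theta)\xi$ for the frame operator. Your extra remarks on convergence of the sum and on why the positive scalars pass through the sandwich $A(\cdot)A^{\ast}$ are fine but do not change the argument.
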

	\begin{proof}
		We have 
		\begin{equation}\label{eq11}
			A\langle\theta \xi, \theta \xi\rangle A^{\ast}\leq\sum_{i\in I}\langle T_{i}\theta \xi, T_{i}\theta \xi\rangle\leq B\langle\theta \xi, \theta \xi\rangle B^{\ast}, \forall \xi\in\mathcal{X}.
		\end{equation}
		Using Lemma \ref{2.8}, we have $\bar{p}_{\mathcal{X}}((\theta^{\ast}\theta)^{-1})^{-1}\langle \xi,\xi\rangle\leq\langle \theta \xi,\theta \xi\rangle$, $\forall \xi\in\mathcal{X}$. This implies
		\begin{equation}\label{eq22} 
			\bar{p}_{\mathcal{X}}((\theta^{\ast}\theta)^{-1})^{-\frac{1}{2}}A\langle \xi,\xi\rangle(\bar{p}_{\mathcal{X}}((\theta^{\ast}\theta)^{-1})^{-\frac{1}{2}}A)^{\ast}\leq A\langle \theta \xi,\theta \xi\rangle A^{\ast}, \forall \xi\in\mathcal{X}.
		\end{equation}
		And we know that $\langle \theta \xi,\theta \xi\rangle\leq \bar{p}_{\mathcal{X}}(\theta)^{2}\langle \xi,\xi\rangle$, $\forall \xi\in\mathcal{X}$. This implies that
		\begin{equation}\label{eq33}
			B\langle \theta \xi,\theta \xi\rangle B^{\ast}\leq\bar{p}_{\mathcal{X}}(\theta)B\langle \xi,\xi\rangle(\bar{p}_{\mathcal{X}}(\theta)B)^{\ast}, \forall \xi\in\mathcal{X}.
		\end{equation}
		Using \eqref{eq11}, \eqref{eq22}, \eqref{eq33} we have
		\begin{align*}
			\bar{p}_{\mathcal{X}}((\theta^{\ast}\theta)^{-1})^{-\frac{1}{2}}A\langle \xi,\xi\rangle(\bar{p}_{\mathcal{X}}((\theta^{\ast}\theta)^{-1})^{-\frac{1}{2}}A)^{\ast}&\leq\sum_{i\in I}\langle T_{i}\theta x, T_{i}\theta \xi\rangle\\&\leq\bar{p}_{\mathcal{X}}(\theta) B\langle \xi,\xi\rangle(\bar{p}_{\mathcal{X}}(\theta )B)^{\ast}, \forall \xi\in\mathcal{X}.
		\end{align*}
		So $\{T_{i}\theta\}_{i\in I}$ is an $\ast$-operator frame for $\mathcal{X}$.
		
		Moreover for every $\xi\in\mathcal{X}$, we have
		$$\theta^{\ast}S\theta \xi=\theta^{\ast}\sum_{i\in I}T_{i}^{\ast}T_{i}\theta \xi=\sum_{i\in I}\theta^{\ast}T_{i}^{\ast}T_{i}\theta \xi=\sum_{i\in I}(T_{i}\theta)^{\ast}(T_{i}\theta)\xi.$$ This completes the proof.	
	\end{proof}
	\begin{corollary}
		Let $\{T_{i}\}_{i\in I}\subset Hom_{\mathcal{A}}^{\ast}(\mathcal{X})$ be an $\ast$-operator frame for $\mathcal{X}$, with $\ast$-frame operator $S$. Then $\{T_{i}S^{-1}\}_{i\in I}$ is an $\ast$-operator frame for $\mathcal{X}$.
	\end{corollary}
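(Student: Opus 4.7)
The plan is to invoke Theorem \ref{Th3.9} with the particular choice $\theta = S^{-1}$, since that theorem says precisely that composing all frame operators on the right with an injective, closed-range adjointable map again yields an $\ast$-operator frame. So the whole task reduces to verifying that $S^{-1}$ belongs to $Hom_{\mathcal{A}}^{\ast}(\mathcal{X})$ and satisfies the hypotheses of Theorem \ref{Th3.9}.

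First I would recall what the preceding theorem established about $S$: the frame operator is bounded, self-adjoint, positive, and invertible, with $\bar{p}_{\mathcal{X}}(A^{-1})^{-2}\leq \bar{p}_{\mathcal{X}}(S)\leq \bar{p}_{\mathcal{X}}(B)^{2}$. From invertibility of $S$ in $Hom_{\mathcal{A}}^{\ast}(\mathcal{X})$ one gets immediately that $S^{-1}$ is adjointable (with $(S^{-1})^{\ast} = (S^{\ast})^{-1} = S^{-1}$, so $S^{-1}$ is itself self-adjoint), bijective, and in particular both injective and of closed range (the range being all of $\mathcal{X}$). Therefore $\theta = S^{-1}$ satisfies every hypothesis needed to apply Theorem \ref{Th3.9}.

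Applying Theorem \ref{Th3.9} then yields directly that $\{T_{i}S^{-1}\}_{i\in I}$ is an $\ast$-operator frame for $\mathcal{X}$, with lower bound $\bar{p}_{\mathcal{X}}(((S^{-1})^{\ast}S^{-1})^{-1})^{-\frac{1}{2}}A = \bar{p}_{\mathcal{X}}(S^{2})^{-\frac{1}{2}}A$ and upper bound $\bar{p}_{\mathcal{X}}(S^{-1}) B$, and with associated $\ast$-frame operator $(S^{-1})^{\ast}S S^{-1} = S^{-1}SS^{-1}= S^{-1}$. This last identity is a pleasant consistency check: the canonical dual of an $\ast$-operator frame has frame operator $S^{-1}$, exactly as in the classical Hilbert $C^{\ast}$-module setting.

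I do not expect any real obstacle here; the argument is essentially one line once Theorem \ref{Th3.9} is in hand. The only point that deserves a brief sentence of justification is the self-adjointness and invertibility of $S^{-1}$, which follows mechanically from the previous theorem, and the trivial observation that an invertible element of $Hom_{\mathcal{A}}^{\ast}(\mathcal{X})$ is automatically injective with closed range. Thus the proof will simply cite the previous theorem, verify the two bullet hypotheses, apply Theorem \ref{Th3.9}, and conclude.
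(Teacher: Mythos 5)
Your proposal is correct and follows exactly the paper's route: the paper's proof is the one-line application of Theorem \ref{Th3.9} with $\theta=S^{-1}$, and you do the same, merely adding the (valid) verification that $S^{-1}$ is adjointable, injective, and has closed range thanks to the invertibility and self-adjointness of $S$ established in the preceding theorem.
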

	\begin{proof}
		Result from theorem \ref{Th3.9} by taking $\theta=S^{-1}$.
	\end{proof}	
	\begin{corollary}
		Let $\{T_{i}\}_{i\in I}\subset Hom_{\mathcal{A}}^{\ast}(\mathcal{X})$ be an $\ast$-operator frame for $\mathcal{X}$, with $\ast$-frame operator $S$. Then $\{T_{i}S^{-\frac{1}{2}}\}_{i\in I}$ is a Parseval $\ast$-operator frame for $\mathcal{X}$.
	\end{corollary}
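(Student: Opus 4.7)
The plan is to apply Theorem \ref{Th3.9} with the choice $\theta = S^{-\frac{1}{2}}$, and then directly compute the associated $\ast$-frame operator to check it equals the identity.

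First I would justify that $\theta = S^{-\frac{1}{2}}$ is a legitimate choice for Theorem \ref{Th3.9}. Since $S$ is positive, self-adjoint and invertible (this is the content of the theorem preceding the corollaries), its positive square root $S^{\frac{1}{2}}$ exists in $Hom_{\mathcal{A}}^{\ast}(\mathcal{X})$, is self-adjoint, and is itself invertible with inverse $S^{-\frac{1}{2}}$. In particular $S^{-\frac{1}{2}}$ is a bijective adjointable operator on $\mathcal{X}$, so it is injective and has closed range (indeed, equal to $\mathcal{X}$), which is exactly the hypothesis required on $\theta$ in Theorem \ref{Th3.9}.

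Next I would invoke Theorem \ref{Th3.9} to conclude that $\{T_i S^{-\frac{1}{2}}\}_{i\in I}$ is an $\ast$-operator frame for $\mathcal{X}$, and that its $\ast$-frame operator is $\theta^{\ast} S \theta = S^{-\frac{1}{2}} S S^{-\frac{1}{2}}$. Using self-adjointness of $S^{-\frac{1}{2}}$ and the functional calculus identity $S^{-\frac{1}{2}} S S^{-\frac{1}{2}} = I_{\mathcal{X}}$, this $\ast$-frame operator equals $I_{\mathcal{X}}$. Consequently, for every $\xi \in \mathcal{X}$,
\begin{equation*}
\sum_{i\in I}\langle T_{i}S^{-\frac{1}{2}}\xi, T_{i}S^{-\frac{1}{2}}\xi\rangle \;=\; \langle S^{-\frac{1}{2}} S S^{-\frac{1}{2}}\xi,\xi\rangle \;=\; \langle \xi,\xi\rangle \;=\; 1_{\mathcal{A}}\langle \xi,\xi\rangle 1_{\mathcal{A}}^{\ast}.
\end{equation*}
This yields the Parseval bounds $A = B = 1_{\mathcal{A}}$, which is the defining property of a Parseval $\ast$-operator frame.

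The argument is essentially a one-line corollary of Theorem \ref{Th3.9}, so there is no genuine obstacle; the only point that requires care is the existence and adjointability of $S^{-\frac{1}{2}}$. In the pro-$C^{\ast}$-algebra setting one must appeal to the continuous functional calculus applied to the positive invertible operator $S$ (so that the function $t \mapsto t^{-1/2}$, continuous on the spectrum of $S$ which is contained in $[\bar{p}_{\mathcal{X}}(A^{-1})^{-2},\bar{p}_{\mathcal{X}}(B)^{2}]$, yields a well-defined element of $Hom_{\mathcal{A}}^{\ast}(\mathcal{X})$). Once that is granted, the identity $S^{-\frac{1}{2}} S S^{-\frac{1}{2}} = I_{\mathcal{X}}$ and the Parseval conclusion follow immediately.
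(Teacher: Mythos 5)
Your proposal is correct and follows essentially the same route as the paper, which simply invokes Theorem \ref{Th3.9} with $\theta=S^{-\frac{1}{2}}$. You additionally spell out the points the paper leaves implicit (existence and adjointability of $S^{-\frac{1}{2}}$ via functional calculus, and the computation $\theta^{\ast}S\theta=S^{-\frac{1}{2}}SS^{-\frac{1}{2}}=I_{\mathcal{X}}$ yielding the Parseval bounds $A=B=1_{\mathcal{A}}$), which is exactly the detail needed to pass from ``$\ast$-operator frame'' to ``Parseval''.
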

	\begin{proof}
		Result from theorem \ref{Th3.9} by taking $\theta=S^{-\frac{1}{2}}$.
	\end{proof}
	
	\begin{theorem} \label{.}
		Let $\{T_{i}\}_{i\in I}\subset Hom_{\mathcal{A}}^{\ast}(\mathcal{X})$ be an $\ast$-operator frame for $\mathcal{X}$, with lower and upper bounds $A$ and $B$, respectively. Let $\theta\in Hom_{\mathcal{A}}^{\ast}(\mathcal{X})$ be surjective. Then $\{\theta T_{i}\}_{i\in I}$ is an $\ast$-operator frame for $\mathcal{X}$ with bounds $A\bar{p}_{\mathcal{X}}((\theta\theta^{\ast})^{-1})^{-\frac{1}{2}}$, $B\bar{p}_{\mathcal{X}}(\theta)$.
	\end{theorem}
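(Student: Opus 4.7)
The plan is to mirror the strategy of Theorem~\ref{Th3.9}, now with $\theta$ multiplying on the left of each $T_{i}$. The upper bound comes from absorbing $\theta$ into a scalar factor $\bar{p}_{\mathcal{X}}(\theta)$, while the lower bound exploits the invertibility of $\theta\theta^{\ast}$ guaranteed by Lemma~\ref{2.8}(ii) under the surjectivity assumption on $\theta$.

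First I would establish the upper bound. For every $\xi\in\mathcal{X}$ and each $i\in I$, the standard Hilbert pro-$C^{\ast}$-module estimate gives $\langle \theta T_{i}\xi,\theta T_{i}\xi\rangle\leq \bar{p}_{\mathcal{X}}(\theta)^{2}\langle T_{i}\xi,T_{i}\xi\rangle$. Summing over $i$ and invoking the upper $\ast$-operator frame bound of $\{T_{i}\}$ from \eqref{eqq33} yields
\begin{equation*}
\sum_{i\in I}\langle \theta T_{i}\xi,\theta T_{i}\xi\rangle \;\leq\; \bar{p}_{\mathcal{X}}(\theta)^{2}\, B\langle\xi,\xi\rangle B^{\ast} \;=\; \bigl(B\bar{p}_{\mathcal{X}}(\theta)\bigr)\langle\xi,\xi\rangle\bigl(B\bar{p}_{\mathcal{X}}(\theta)\bigr)^{\ast},
\end{equation*}
using that $\bar{p}_{\mathcal{X}}(\theta)$ is a real scalar and hence commutes with $B$.

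Next, for the lower bound I would apply Lemma~\ref{2.8}(ii) to the surjective $\theta$, obtaining that $\theta\theta^{\ast}$ is invertible with $\bar{p}_{\mathcal{X}}((\theta\theta^{\ast})^{-1})^{-1}I_{\mathcal{X}}\leq \theta\theta^{\ast}$. Translating this into the pointwise estimate $\bar{p}_{\mathcal{X}}((\theta\theta^{\ast})^{-1})^{-1}\langle T_{i}\xi,T_{i}\xi\rangle\leq\langle \theta T_{i}\xi,\theta T_{i}\xi\rangle$, summing over $i$, and prepending the lower frame bound $A\langle\xi,\xi\rangle A^{\ast}\leq\sum_{i}\langle T_{i}\xi,T_{i}\xi\rangle$, I would reach
\begin{equation*}
\bigl(A\bar{p}_{\mathcal{X}}((\theta\theta^{\ast})^{-1})^{-\tfrac{1}{2}}\bigr)\langle\xi,\xi\rangle\bigl(A\bar{p}_{\mathcal{X}}((\theta\theta^{\ast})^{-1})^{-\tfrac{1}{2}}\bigr)^{\ast}\;\leq\;\sum_{i\in I}\langle \theta T_{i}\xi,\theta T_{i}\xi\rangle,
\end{equation*}
which is exactly the announced lower bound.

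The step I expect to be most delicate is the conversion of the operator inequality $\theta\theta^{\ast}\geq cI_{\mathcal{X}}$ (with $c=\bar{p}_{\mathcal{X}}((\theta\theta^{\ast})^{-1})^{-1}$) into the pointwise estimate $c\langle y,y\rangle\leq\langle \theta y,\theta y\rangle$, since $\langle \theta y,\theta y\rangle=\langle \theta^{\ast}\theta y,y\rangle$ a priori involves $\theta^{\ast}\theta$ rather than $\theta\theta^{\ast}$. This asymmetry — absent in the injective/closed-range setting of Theorem~\ref{Th3.9} — must be navigated carefully, for instance by writing $y=\theta\theta^{\ast}(\theta\theta^{\ast})^{-1}y$ via the right inverse furnished by surjectivity and then transferring the sandwich across the resulting factorization.
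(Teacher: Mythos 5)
Your upper bound is fine and coincides with the paper's. The genuine gap is exactly the step you flagged and did not close: passing from $\theta\theta^{\ast}\geq \bar{p}_{\mathcal{X}}((\theta\theta^{\ast})^{-1})^{-1}I_{\mathcal{X}}$ (Lemma \ref{2.8}(ii)) to the pointwise estimate $\bar{p}_{\mathcal{X}}((\theta\theta^{\ast})^{-1})^{-1}\langle T_{i}\xi,T_{i}\xi\rangle\leq\langle\theta T_{i}\xi,\theta T_{i}\xi\rangle$. Since $\langle\theta y,\theta y\rangle=\langle\theta^{\ast}\theta y,y\rangle$, an estimate of the form $c\langle y,y\rangle\leq\langle\theta y,\theta y\rangle$ for all $y$ is equivalent to $\theta^{\ast}\theta\geq cI_{\mathcal{X}}$, i.e.\ to $\theta$ being bounded below, and surjectivity alone does not provide this: a surjective adjointable operator may have nontrivial kernel. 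Concretely, take $\mathcal{A}=\mathbb{C}$, $\mathcal{X}=\ell^{2}$, the single operator $T_{1}=I_{\mathcal{X}}$ (a Parseval $\ast$-operator frame) and $\theta$ the backward shift; then $\theta$ is adjointable and surjective, yet $\sum_{i}\langle\theta T_{i}e_{1},\theta T_{i}e_{1}\rangle=\langle\theta e_{1},\theta e_{1}\rangle=0$ while $\langle e_{1},e_{1}\rangle=1$, so no lower bound of the asserted form can hold. Your proposed repair --- writing $y=\theta\theta^{\ast}(\theta\theta^{\ast})^{-1}y$ and ``transferring the sandwich'' --- is vacuous: it merely restates that $y$ lies in the range of $\theta$ and leaves $\langle\theta y,\theta y\rangle=\langle\theta^{\ast}\theta y,y\rangle$ untouched, which still vanishes for $y\in\ker\theta$.

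For what it is worth, the paper's own proof makes exactly the same unjustified leap: its inequality \eqref{eqq111} is asserted ``using Lemma \ref{2.8}'' with no argument, and the example above shows it is false in the stated generality. So your instinct about where the difficulty sits is correct, but it is not a technicality to be navigated: under the hypothesis ``$\theta$ surjective'' the lower frame bound simply fails. A sound variant either assumes $\theta$ injective with closed range, in which case Lemma \ref{2.8}(i) gives $\bar{p}_{\mathcal{X}}((\theta^{\ast}\theta)^{-1})^{-1}I_{\mathcal{X}}\leq\theta^{\ast}\theta$ and your argument goes through with lower bound $A\bar{p}_{\mathcal{X}}((\theta^{\ast}\theta)^{-1})^{-\frac{1}{2}}$, or keeps surjectivity but considers the family $\{T_{i}\theta^{\ast}\}_{i\in I}$ instead, for which $\sum_{i}\langle T_{i}\theta^{\ast}\xi,T_{i}\theta^{\ast}\xi\rangle\geq A\langle\theta^{\ast}\xi,\theta^{\ast}\xi\rangle A^{\ast}=A\langle\theta\theta^{\ast}\xi,\xi\rangle A^{\ast}$, and there Lemma \ref{2.8}(ii) applies as intended.
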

	\begin{proof}
		By the definition of $\ast$-operator frame, we have
		\begin{equation}\label{eqq11}
			A\langle \xi, \xi\rangle A^{\ast}\leq\sum_{i\in I}\langle T_{i}\xi, T_{i}\xi\rangle\leq B\langle \xi, \xi\rangle B^{\ast}, \forall \xi\in\mathcal{X}.
		\end{equation}
		Using Lemma \ref{2.8}, we have
		\begin{equation}\label{eqq111} \bar{p}_{\mathcal{X}}((\theta\theta^{\ast})^{-1})^{-1}\langle T_{i}\xi, T_{i}\xi\rangle\leq\langle \theta T_{i}x,\theta T_{i}\xi\rangle\leq\bar{p}_{\mathcal{X}}^{2}\langle T_{i}\xi, T_{i}\xi\rangle, \forall \xi\in\mathcal{X}.
		\end{equation}
		Using \eqref{eqq11}, \eqref{eqq111}, we have
		\begin{align*}
			\bar{p}_{\mathcal{X}}((\theta\theta^{\ast})^{-1})^{-\frac{1}{2}}A\langle \xi,\xi\rangle(\bar{p}_{\mathcal{X}}((\theta\theta^{\ast})^{-1})^{-\frac{1}{2}}A)^{\ast}&\leq\sum_{i\in I}\langle\theta T_{i}\xi, \theta T_{i}\xi\rangle\\&\leq B\bar{p}_{\mathcal{X}}(\theta)\langle \xi,\xi\rangle(B\bar{p}_{\mathcal{X}}(\theta))^{\ast}, \forall \xi\in\mathcal{X}.
		\end{align*}
		So $\{\theta T_{i}\}_{i\in I}$ is an $\ast$-operator frame for $\mathcal{X}$.
	\end{proof}
	\begin{theorem}
		Let $(\mathcal{X},\mathcal{A},\langle.,.\rangle_{\mathcal{A}})$ and $(\mathcal{X},\mathcal{B},\langle.,.\rangle_{\mathcal{B}})$ be two Hilbert pro-$\mathcal{C^{\ast}}$-modules and let $\varphi :\mathcal{A}\longrightarrow \mathcal{B}$ be a $\ast$-homomorphism and $\theta$ be a map on $\mathcal{X}$ such that $\langle \theta \xi,\theta \eta\rangle_{\mathcal{B}}=\varphi(\langle \xi, \eta\rangle_{\mathcal{A}})$ for all $\xi,\eta\in\mathcal{X}$. Also, suppose that $\{T_{i}\}_{i\in I}\subset Hom_{\mathcal{A}}^{\ast}(\mathcal{X})$ is an $\ast$-operator frame for $(\mathcal{X},\mathcal{A},\langle.,.\rangle_{\mathcal{A}})$ with $\ast$-frame operator $S_{\mathcal{A}} $ and lower and upper $\ast$-operator frame bounds $A$, $B$  respectively. If $\theta$ is surjective and $\theta T_{i}= T_{i}\theta$ for each $i$ in $I$, then $\{T_{i}\}_{i\in I}$ is an $\ast$-operator frame for $(\mathcal{X},\mathcal{B},\langle.,.\rangle_{\mathcal{B}})$ with $\ast$-frame operator $S_{\mathcal{B}} $ and lower and upper $\ast$-operator frame bounds $\varphi(A)$, $\varphi(B)$ respectively, and $\langle S_{\mathcal{B}}\theta \xi,\theta \eta\rangle_{\mathcal{B}}=\varphi(\langle S_{\mathcal{A}}\xi, \eta\rangle_{\mathcal{A}})$.
	\end{theorem}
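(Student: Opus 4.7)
The plan is to transport the $\mathcal{A}$-frame inequality to a $\mathcal{B}$-frame inequality by applying $\varphi$ to each term and then using the intertwining identity $\langle \theta\xi,\theta\eta\rangle_{\mathcal{B}}=\varphi(\langle\xi,\eta\rangle_{\mathcal{A}})$ together with $\theta T_i=T_i\theta$ to recognize the images as inner products against $\theta\xi$. Surjectivity of $\theta$ will then let us quantify over arbitrary vectors in $\mathcal{X}$.

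First, I would start from the defining inequality
\begin{equation*}
A\langle \xi,\xi\rangle_{\mathcal{A}} A^{\ast}\le \sum_{i\in I}\langle T_i\xi,T_i\xi\rangle_{\mathcal{A}}\le B\langle \xi,\xi\rangle_{\mathcal{A}} B^{\ast},\qquad \xi\in\mathcal{X},
\end{equation*}
and apply $\varphi$. By Lemma \ref{5}, $\varphi$ preserves the order; since $\varphi$ is a $\ast$-homomorphism it also commutes with adjoints, products, and (by continuity, using that it is a bounded $\ast$-homomorphism between pro-$C^\ast$-algebras) with the norm-convergent sums that define the middle term. This converts the inequality into
\begin{equation*}
\varphi(A)\,\varphi(\langle\xi,\xi\rangle_{\mathcal{A}})\,\varphi(A)^{\ast}\le \sum_{i\in I}\varphi(\langle T_i\xi,T_i\xi\rangle_{\mathcal{A}})\le \varphi(B)\,\varphi(\langle\xi,\xi\rangle_{\mathcal{A}})\,\varphi(B)^{\ast}.
\end{equation*}

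Next I would use the hypothesis $\langle\theta\alpha,\theta\beta\rangle_{\mathcal{B}}=\varphi(\langle\alpha,\beta\rangle_{\mathcal{A}})$ twice: once with $\alpha=\beta=\xi$ on the two outer terms, and once with $\alpha=\beta=T_i\xi$ on the middle term. Combined with the commutation $\theta T_i=T_i\theta$, the middle becomes $\sum_i \langle T_i\theta\xi,T_i\theta\xi\rangle_{\mathcal{B}}$, so the inequality reads
\begin{equation*}
\varphi(A)\langle \theta\xi,\theta\xi\rangle_{\mathcal{B}}\varphi(A)^{\ast}\le \sum_{i\in I}\langle T_i\theta\xi,T_i\theta\xi\rangle_{\mathcal{B}}\le \varphi(B)\langle\theta\xi,\theta\xi\rangle_{\mathcal{B}}\varphi(B)^{\ast}.
\end{equation*}
Because $\theta$ is surjective, every $\eta\in\mathcal{X}$ is of the form $\theta\xi$, so this is exactly the $\ast$-operator frame inequality for $(\mathcal{X},\mathcal{B},\langle.,.\rangle_{\mathcal{B}})$ with bounds $\varphi(A)$ and $\varphi(B)$. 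Before stating this I need to check that each $T_i$ is $\mathcal{B}$-adjointable with the \emph{same} adjoint $T_i^\ast$: writing arbitrary $\zeta_1,\zeta_2\in\mathcal{X}$ as $\theta\xi,\theta\eta$ and chasing through the identity $\langle T_i\theta\xi,\theta\eta\rangle_{\mathcal{B}}=\varphi(\langle T_i\xi,\eta\rangle_{\mathcal{A}})=\varphi(\langle\xi,T_i^\ast\eta\rangle_{\mathcal{A}})=\langle\theta\xi,T_i^\ast\theta\eta\rangle_{\mathcal{B}}$ gives this at once.

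Finally, for the frame-operator identity I would compute directly:
\begin{equation*}
\langle S_{\mathcal{B}}\theta\xi,\theta\eta\rangle_{\mathcal{B}}=\sum_{i\in I}\langle T_i\theta\xi,T_i\theta\eta\rangle_{\mathcal{B}}=\sum_{i\in I}\langle \theta T_i\xi,\theta T_i\eta\rangle_{\mathcal{B}}=\sum_{i\in I}\varphi(\langle T_i\xi,T_i\eta\rangle_{\mathcal{A}})=\varphi(\langle S_{\mathcal{A}}\xi,\eta\rangle_{\mathcal{A}}),
\end{equation*}
pulling $\varphi$ outside the sum by continuity in the last step. The main obstacle I anticipate is not any single algebraic manipulation but rather the justification of interchanging $\varphi$ with the infinite sum in the pro-$C^\ast$ setting; once that is granted (via continuity of $\varphi$ with respect to every seminorm and the unconditional convergence guaranteed by the Bessel bound), the rest is a straight application of the intertwining hypothesis and the surjectivity of $\theta$.
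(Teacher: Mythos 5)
Your proposal is correct and follows essentially the same route as the paper's proof: apply $\varphi$ to the $\mathcal{A}$-frame inequality using Lemma \ref{5}, distribute $\varphi$ as a $\ast$-homomorphism, convert via $\varphi(\langle\cdot,\cdot\rangle_{\mathcal{A}})=\langle\theta\cdot,\theta\cdot\rangle_{\mathcal{B}}$ and $\theta T_i=T_i\theta$, invoke surjectivity of $\theta$, and then verify $\langle S_{\mathcal{B}}\theta\xi,\theta\eta\rangle_{\mathcal{B}}=\varphi(\langle S_{\mathcal{A}}\xi,\eta\rangle_{\mathcal{A}})$ by the same chain of equalities (run in the opposite direction). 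Your added remarks on interchanging $\varphi$ with the infinite sum and on the $\mathcal{B}$-adjointability of the $T_i$ are reasonable refinements of points the paper leaves implicit, not a different argument.
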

	\begin{proof} Let $\eta\in\mathcal{X}$ then there exists $\xi\in\mathcal{X}$ such that $\theta \xi=\eta$ ($\theta$ is surjective). By the definition of $\ast$-operator frames we have
		$$A\langle \xi,\xi\rangle_{\mathcal{A}} A^{\ast}\leq\sum_{i\in I}\langle T_{i}\xi, T_{i}\xi\rangle_{\mathcal{A}}\leq B\langle \xi,\xi\rangle_{\mathcal{A}} B^{\ast}.$$
		By lemma \ref{5} we have
		$$\varphi(A\langle \xi,\xi\rangle_{\mathcal{A}} A^{\ast})\leq\varphi(\sum_{i\in I}\langle T_{i}\xi, T_{i}\xi\rangle_{\mathcal{A}})\leq\varphi( B\langle \xi,\xi\rangle_{\mathcal{A}} B^{\ast}).$$
		By the definition of $\ast$-homomorphism we have
		$$\varphi(A)\varphi(\langle \xi,\xi\rangle_{\mathcal{A}}) \varphi(A^{\ast})\leq\sum_{i\in I}\varphi(\langle T_{i}\xi, T_{i}\xi\rangle_{\mathcal{A}})\leq\varphi( B)\varphi(\langle \xi,\xi\rangle_{\mathcal{A}}) \varphi(B^{\ast}).$$
		By the relation betwen $\theta$ and $\varphi$ we get
		$$\varphi(A)\langle \theta \xi,\theta \xi\rangle_{\mathcal{B}} \varphi(A)^{\ast}\leq\sum_{i\in I}\langle \theta T_{i}\xi,\theta T_{i}\xi\rangle_{\mathcal{B}}\leq\varphi( B)\langle\theta \xi,\theta \xi\rangle_{\mathcal{B}} \varphi(B)^{\ast}.$$
		By the relation betwen $\theta$ and $T_{i}$ we have
		$$\varphi(A)\langle \theta \xi,\theta \xi\rangle_{\mathcal{B}} \varphi(A)^{\ast}\leq\sum_{i\in I}\langle T_{i}\theta \xi, T_{i}\theta \xi\rangle_{\mathcal{B}}\leq\varphi( B)\langle\theta \xi,\theta \xi\rangle_{\mathcal{B}} \varphi(B)^{\ast}.$$
		Then
		$$
		\varphi(A)\langle  \eta, \eta \rangle_{\mathcal{B}} (\varphi(A))^{\ast}\leq\sum_{i\in I}\langle T_{i}\eta, T_{i}\eta\rangle_{\mathcal{B}}
		\leq\varphi( B)\langle \eta,\eta\rangle_{\mathcal{B}} (\varphi(B))^{\ast} , \forall \eta\in\mathcal{X}.
		$$
		On the other hand we have
		$$
		\aligned
		\varphi(\langle S_{\mathcal{A}}\xi, \eta\rangle_{\mathcal{A}})&=\varphi(\langle\sum_{i\in I}T_{i}^{\ast}T_{i}\xi,\eta\rangle_{\mathcal{A}})\\
		&=\sum_{i\in I}\varphi(\langle T_{i}\xi, T_{i}\eta\rangle_{\mathcal{A}})\\
		&=\sum_{i\in I}\langle\theta T_{i}\xi,\theta T_{i}\eta\rangle_{\mathcal{B}}
		\\
		&=\sum_{i\in I}\langle T_{i}\theta \xi, T_{i}\theta \eta\rangle_{\mathcal{B}}\\
		&=\langle\sum_{i\in I}T_{i}^{\ast}T_{i}\theta \xi,\theta \eta\rangle_{\mathcal{B}}\\
		&=\langle S_{\mathcal{B}}\theta \xi,\theta \eta\rangle_{\mathcal{B}}.
		\endaligned
		$$
		Which completes the proof.
	\end{proof}
	\section{Tensor product}
	The minimal or injective tensor product of the pro-$C^{\ast}$-algebras $\mathcal{A}$ and $\mathcal{B}$, denoted by $\mathcal{A} \otimes \mathcal{B}$, is the completion of the algebraic tensor product $\mathcal{A} \otimes_{\text {alg }} \mathcal{B}$ with respect to the topology determined by a family of $C^{\ast}$-seminorms. Suppose that $\mathcal{X}$ is a Hilbert module over a pro-$C^{\ast}$-algebra $\mathcal{A}$ and $\mathcal{Y}$ is a Hilbert module over a pro-$C^{\ast}$-algebra $\mathcal{B}$. The algebraic tensor product $\mathcal{X} \otimes_{\text {alg }} \mathcal{Y}$ of $\mathcal{X}$ and $\mathcal{Y}$ is a pre-Hilbert $ \mathcal{A} \otimes \mathcal{B}$-module with the action of $ \mathcal{A} \otimes \mathcal{B}$ on $ \mathcal{X} \otimes_{\text {alg }}\mathcal{Y}$ defined by
	$$
	(\xi \otimes \eta)(a \otimes b)=\xi a \otimes \eta b
	\; \; \text{for all} \; \xi \in \mathcal{X} ,\eta \in \mathcal{Y}, a \in \mathcal{A} \; \text{and} \; b \in \mathcal{B}$$
	and the inner product 
	$$
	\langle\cdot, \cdot\rangle:\left(\mathcal{X} \otimes_{\text {alg }} \mathcal{Y}\right) \times\left(\mathcal{X} \otimes_{\text {alg }} \mathcal{Y}\right) \rightarrow \mathcal{A} \otimes_{\text {alg }}\mathcal{B}. \text { defined by }
	$$
	$$
	\left\langle\xi_{1} \otimes \eta_{1}, \xi_{2} \otimes \eta_{2}\right\rangle=\left\langle\xi_{1}, \xi_{2}\right\rangle \otimes\left\langle\eta_{1}, \eta_{2}\right\rangle
	$$
	We also know that for $ z=\sum_{i=1}^{n}\xi_{i}\otimes \eta_{i} $ in $\mathcal{X}\otimes_{alg}\mathcal{Y}$ we have $ \langle z,z\rangle_{\mathcal{A}\otimes\mathcal{B}}=\sum_{i,j}\langle \xi_{i},\xi_{j}\rangle_{\mathcal{A}}\otimes\langle \eta_{i},\eta_{j}\rangle_{\mathcal{B}}\geq0 $ and $ \langle z,z\rangle_{\mathcal{A}\otimes\mathcal{B}}=0 $ iff $z=0$.\\
	The external tensor product of $\mathcal{X}$ and $\mathcal{Y}$ is the Hilbert module $\mathcal{X} \otimes \mathcal{Y}$ over $\mathcal{A} \otimes \mathcal{B}$ obtained by the completion of the pre-Hilbert $\mathcal{A} \otimes \mathcal{B}$-module $\mathcal{X} \otimes_{\text {alg }} \mathcal{Y}$.
	
	If $P \in M(\mathcal{X})$ and $Q \in M(\mathcal{Y})$ then there is a unique adjointable module morphism $P \otimes Q: \mathcal{A} \otimes  \mathcal{B} \rightarrow \mathcal{X} \otimes \mathcal{Y}$ such that $(P \otimes Q)(a \otimes b)=P(a) \otimes Q(b)$ and $(P \otimes Q)^{*}(a \otimes b)=P^{*}(a) \otimes Q^{*}(b)$ for all $a \in A$ and for all $b \in B$ (see, for example, cite The minimal or injective tensor product of the pro-$C^{\ast}$-algebras $\mathcal{A}$ and $\mathcal{B}$, denoted by $\mathcal{A} \otimes \mathcal{B}$, is the completion of the algebraic tensor product $\mathcal{A} \otimes_{\text {alg }} \mathcal{B}$ with respect to the topology determined by a family of $C^{\ast}$-seminorms. Suppose that $\mathcal{X}$ is a Hilbert module over a pro-$C^{\ast}$-algebra $\mathcal{A}$ and $\mathcal{Y}$ is a Hilbert module over a pro-$C^{\ast}$-algebra $\mathcal{B}$. The algebraic tensor product $\mathcal{X} \otimes_{\text {alg }} \mathcal{Y}$ of $\mathcal{X}$ and $\mathcal{Y}$ is a pre-Hilbert $ \mathcal{A} \otimes \mathcal{B}$-module with the action of $ \mathcal{A} \otimes \mathcal{B}$ on $ \mathcal{X} \otimes_{\text {alg }}\mathcal{Y}$ defined by
	$$
	(\xi \otimes \eta)(a \otimes b)=\xi a \otimes \eta b
	\; \; \text{for all} \; \xi \in \mathcal{X} ,\eta \in \mathcal{Y}, a \in \mathcal{A} \; \text{and} \; b \in \mathcal{B}$$
	and the inner product 
	$$
	\langle\cdot, \cdot\rangle:\left(\mathcal{X} \otimes_{\text {alg }} \mathcal{Y}\right) \times\left(\mathcal{X} \otimes_{\text {alg }} \mathcal{Y}\right) \rightarrow \mathcal{A} \otimes_{\text {alg }}\mathcal{B}. \text { defined by }
	$$
	$$
	\left\langle\xi_{1} \otimes \eta_{1}, \xi_{2} \otimes \eta_{2}\right\rangle=\left\langle\xi_{1}, \xi_{2}\right\rangle \otimes\left\langle\eta_{1}, \eta_{2}\right\rangle
	$$
	We also know that for $ z=\sum_{i=1}^{n}\xi_{i}\otimes \eta_{i} $ in $\mathcal{X}\otimes_{alg}\mathcal{Y}$ we have $ \langle z,z\rangle_{\mathcal{A}\otimes\mathcal{B}}=\sum_{i,j}\langle \xi_{i},\xi_{j}\rangle_{\mathcal{A}}\otimes\langle \eta_{i},\eta_{j}\rangle_{\mathcal{B}}\geq0 $ and $ \langle z,z\rangle_{\mathcal{A}\otimes\mathcal{B}}=0 $ iff $z=0$.\\
	The external tensor product of $\mathcal{X}$ and $\mathcal{Y}$ is the Hilbert module $\mathcal{X} \otimes \mathcal{Y}$ over $\mathcal{A} \otimes \mathcal{B}$ obtained by the completion of the pre-Hilbert $\mathcal{A} \otimes \mathcal{B}$-module $\mathcal{X} \otimes_{\text {alg }} \mathcal{Y}$.
	
	If $P \in M(\mathcal{X})$ and $Q \in M(\mathcal{Y})$ then there is a unique adjointable module morphism $P \otimes Q: \mathcal{A} \otimes  \mathcal{B} \rightarrow \mathcal{X} \otimes \mathcal{Y}$ such that $(P \otimes Q)(a \otimes b)=P(a) \otimes Q(b)$ and $(P \otimes Q)^{*}(a \otimes b)=P^{*}(a) \otimes Q^{*}(b)$ for all $a \in A$ and for all $b \in B$ (see, for example, \cite{Joita})
	Let I and J be countable index sets.
	\begin{theorem}
		Let $\mathcal{X}$ and $\mathcal{Y}$ be two Hilbert pro-$C^{\ast}$-modules over  pro-$C^{\ast}$-algebras $\mathcal{A}$ and $\mathcal{B}$, respectively. Let $\{T_{i}\}_{i\in I}\subset Hom_{\mathcal{A}}^{\ast}(\mathcal{X})$ and $\{L_{j}\}_{j\in J}\subset Hom_{\mathcal{B}}^{\ast}(\mathcal{Y})$ be two $\ast$-operator frames for $\mathcal{X}$ and $\mathcal{Y}$ with $\ast$-frame operators $S_{T}$ and $S_{L}$ and $\ast$-operator frame bounds $(A,B)$ and $(C,D)$ respectively. Then $\{T_{i}\otimes L_{j}\}_{i\in I,j\in J}  $ is an $\ast$-operator frame for Hibert $\mathcal{A}\otimes\mathcal{B}$-module $\mathcal{X}\otimes\mathcal{Y}$ with $\ast$-frame operator $ S_{T}\otimes S_{L}$ and lower and upper $\ast$-operator frame bounds $A\otimes C$ and $ B\otimes D $, respectively.
	\end{theorem}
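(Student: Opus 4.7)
The plan is to establish the claim first on elementary tensors $\xi\otimes\eta$ and then extend by density and continuity to all of $\mathcal{X}\otimes\mathcal{Y}$. I would first verify the frame operator identity $S_{T\otimes L}=S_T\otimes S_L$ by using $(T_i\otimes L_j)^{\ast}=T_i^{\ast}\otimes L_j^{\ast}$ and the product rule on simple tensors to compute
\[
\sum_{i,j}(T_i\otimes L_j)^{\ast}(T_i\otimes L_j)(\xi\otimes\eta)=\Bigl(\sum_i T_i^{\ast}T_i\xi\Bigr)\otimes\Bigl(\sum_j L_j^{\ast}L_j\eta\Bigr)=S_T\xi\otimes S_L\eta=(S_T\otimes S_L)(\xi\otimes\eta),
\]
and then extending to all of $\mathcal{X}\otimes\mathcal{Y}$ by linearity and continuity, since both sides are bounded adjointable maps agreeing on the dense subspace of elementary tensors.

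For the frame bounds on a simple tensor $z=\xi\otimes\eta$, the inner-product identity $\langle\xi_1\otimes\eta_1,\xi_2\otimes\eta_2\rangle=\langle\xi_1,\xi_2\rangle\otimes\langle\eta_1,\eta_2\rangle$ gives
\[
\sum_{i,j}\langle(T_i\otimes L_j)z,(T_i\otimes L_j)z\rangle=\Bigl(\sum_i\langle T_i\xi,T_i\xi\rangle\Bigr)\otimes\Bigl(\sum_j\langle L_j\eta,L_j\eta\rangle\Bigr).
\]
Combining this with the identity $(A\otimes C)\langle z,z\rangle(A\otimes C)^{\ast}=A\langle\xi,\xi\rangle A^{\ast}\otimes C\langle\eta,\eta\rangle C^{\ast}$, the two original $\ast$-operator frame inequalities on $\mathcal{X}$ and $\mathcal{Y}$, and order preservation of the minimal tensor product on positive elements, yields the desired lower and upper bounds $(A\otimes C)\langle z,z\rangle(A\otimes C)^{\ast}$ and $(B\otimes D)\langle z,z\rangle(B\otimes D)^{\ast}$ on simple tensors.

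To extend the lower bound to an arbitrary $z=\sum_k\xi_k\otimes\eta_k$ in the algebraic tensor product, I would use the telescoping identity which rewrites
\[
\langle(S_T\otimes S_L)z,z\rangle-(A\otimes C)\langle z,z\rangle(A\otimes C)^{\ast}
\]
as
\[
\sum_{k,l}\bigl(\langle S_T\xi_k,\xi_l\rangle-A\langle\xi_k,\xi_l\rangle A^{\ast}\bigr)\otimes\langle S_L\eta_k,\eta_l\rangle+\sum_{k,l}A\langle\xi_k,\xi_l\rangle A^{\ast}\otimes\bigl(\langle S_L\eta_k,\eta_l\rangle-C\langle\eta_k,\eta_l\rangle C^{\ast}\bigr),
\]
each double sum being positive because the tensor product of two positive sesquilinear forms (valued in $\mathcal{A}$ and $\mathcal{B}$) is a positive $\mathcal{A}\otimes\mathcal{B}$-valued sesquilinear form. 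A symmetric argument handles the upper bound, and continuity lifts the inequalities from the algebraic tensor product to the completion $\mathcal{X}\otimes\mathcal{Y}$. I expect the main obstacle to be precisely this extension step: because the frame inequality is quadratic in $z$, summing the elementary-tensor inequalities does not suffice, and the resolution relies on the Schur/Hadamard-type positivity of tensor products of positive forms over the minimal tensor product of pro-$C^{\ast}$-algebras. A shorter (but more cavalier) proof would simply verify everything on elementary tensors and invoke density without dwelling on this point.
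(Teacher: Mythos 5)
Your treatment of elementary tensors and of the identity $S_{T\otimes L}=S_T\otimes S_L$ coincides with the paper's argument, so the real point of comparison is the passage from simple tensors to a general finite sum $z=\sum_k\xi_k\otimes\eta_k$. The paper simply asserts this passage ("the last inequality is satisfied for every finite sum of elements in $\mathcal{X}\otimes_{alg}\mathcal{Y}$") with no justification; you correctly identify it as the crux, since the inequality is quadratic in $z$, and your telescoping identity for $\langle (S_T\otimes S_L)z,z\rangle-(A\otimes C)\langle z,z\rangle(A\otimes C)^{\ast}$ is algebraically correct.

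However, the step you lean on to finish --- "the tensor product of two positive sesquilinear forms is positive" --- is not available in the form you need, so the gap is not actually closed. To get $\sum_{k,l}\phi(\xi_k,\xi_l)\otimes\psi(\eta_k,\eta_l)\geq 0$ by the Schur/Hadamard factorization, you need the matrices $[\phi(\xi_k,\xi_l)]$ and $[\psi(\eta_k,\eta_l)]$ to be positive in $M_n(\mathcal{A})$ and $M_n(\mathcal{B})$, not merely positive on the diagonal. The Gram-type factors are fine: $[\langle S_L\eta_k,\eta_l\rangle]$ and $[A\langle\xi_k,\xi_l\rangle A^{\ast}]=[\langle A\xi_k,A\xi_l\rangle]$ are positive because they come from $\mathcal{A}$- (resp. $\mathcal{B}$-) sesquilinear expressions. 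But the defect form $\phi(\xi,\xi')=\langle S_T\xi,\xi'\rangle-A\langle\xi,\xi'\rangle A^{\ast}$ (and likewise the forms built from $C$, and from $B$, $D$ for the upper bound) is only $\mathbb{C}$-sesquilinear, not $\mathcal{A}$-sesquilinear, because $A$ need not be central: $A\langle\lambda\xi,\xi'\rangle A^{\ast}=A\lambda\langle\xi,\xi'\rangle A^{\ast}\neq\lambda A\langle\xi,\xi'\rangle A^{\ast}$ in general. The $\ast$-operator frame inequality quantifies over single vectors, so testing it on $\xi=\sum_k\lambda_k\xi_k$ with $\lambda_k\in\mathcal{A}$ produces $A\langle\xi,\xi\rangle A^{\ast}$, whereas $M_n(\mathcal{A})$-positivity of $[\phi(\xi_k,\xi_l)]$ would require comparison with $\langle\sum_k\lambda_kA\xi_k,\sum_l\lambda_lA\xi_l\rangle$; these differ unless $A$ commutes with the coefficients. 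Hence in each of your two double sums one factor lacks the required matrix positivity, and the decomposition relocates the difficulty rather than resolving it. Your argument does go through when the bounds are central (in particular for ordinary operator frames with scalar bounds); in the general case you would need either to establish the $M_n(\mathcal{A})$-positivity of these defect forms or to find a genuinely different argument --- and it is worth noting that the paper's own proof leaves exactly this step unproved as well.
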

	\begin{proof}
		By the definition of $\ast$-operator frames $\{T_{i}\}_{i\in I} $ and $\{L_{j}\}_{j\in J}$ we have 
		$$A\langle \xi,\xi\rangle_{\mathcal{A}} A^{\ast}\leq\sum_{i\in I}\langle T_{i}\xi,T_{i}\xi\rangle_{\mathcal{A}}\leq B\langle \xi,\xi\rangle_{\mathcal{A}} B^{\ast} , \forall \xi\in\mathcal{X},$$
		and
		$$C\langle \eta,\eta\rangle_{\mathcal{B}} C^{\ast}\leq\sum_{j\in J}\langle L_{j}\eta,L_{j}\eta\rangle_{\mathcal{B}}\leq D\langle \eta,\eta\rangle_{\mathcal{B}} D^{\ast} , \forall \eta\in\mathcal{Y}.$$
		Therefore
		$$
		\aligned
		&(A\langle \xi,\xi\rangle_{\mathcal{A}} A^{\ast})\otimes (C\langle \eta,\eta\rangle_{\mathcal{B}} C^{\ast})\\&\leq\sum_{i\in I}\langle T_{i}\xi,T_{i}\xi\rangle_{\mathcal{A}}\otimes\sum_{j\in J}\langle L_{j}\eta,L_{j}\eta\rangle_{\mathcal{B}}\\
		&\leq (B\langle \xi,\xi\rangle_{\mathcal{A}} B^{\ast})\otimes (D\langle \eta,\eta\rangle_{\mathcal{B}} D^{\ast}) , \forall \xi\in\mathcal{X} ,\forall \eta\in\mathcal{Y}.
		\endaligned
		$$
		Then
		$$
		\aligned
		&(A\otimes C)(\langle \xi,\xi\rangle_{\mathcal{A}}\otimes\langle \eta,\eta\rangle_{\mathcal{B}}) (A^{\ast}\otimes C^{\ast})\\&\leq\sum_{i\in I,j\in J}\langle T_{i}\xi,T_{i}\xi\rangle_{\mathcal{A}}\otimes\langle L_{j}\eta,L_{j}\eta\rangle_{\mathcal{B}}\\
		&\leq (B\otimes D)(\langle \xi,\xi\rangle_{\mathcal{A}}\otimes\langle \eta,\eta\rangle_{\mathcal{B}}) (B^{\ast}\otimes D^{\ast}) , \forall \xi\in\mathcal{X} ,\forall \eta\in\mathcal{Y}.
		\endaligned
		$$
		Consequently we have
		$$
		\aligned
		&(A\otimes C)\langle \xi\otimes \eta,\xi\otimes \eta\rangle_{\mathcal{A\otimes B}} (A\otimes C)^{\ast}\\&\leq\sum_{i\in I,j\in J}\langle T_{i}\xi\otimes L_{j}\eta,T_{i}\xi\otimes L_{j}\eta\rangle_{\mathcal{A\otimes B}} \\
		&\leq (B\otimes D)\langle \xi\otimes \eta,\xi\otimes \eta\rangle_{\mathcal{A\otimes B}} (B\otimes D)^{\ast}, \forall \xi\in\mathcal{X}, \forall \eta\in\mathcal{Y}.
		\endaligned
		$$
		Then for all $\xi\otimes \eta\in\mathcal{X\otimes Y}$ we have
		$$
		\aligned
		&(A\otimes C)\langle \xi\otimes \eta,\xi\otimes \eta\rangle_{\mathcal{A\otimes B}} (A\otimes C)^{\ast}\\&\leq\sum_{i\in I,j\in J}\langle(T_{i}\otimes L_{j})(\xi\otimes \eta),(T_{i}\otimes L_{j})(\xi\otimes \eta)\rangle_{\mathcal{A\otimes B}} \\
		&\leq (B\otimes D)\langle \xi\otimes \eta,\xi\otimes \eta\rangle_{\mathcal{A\otimes B}} (B\otimes D)^{\ast}.
		\endaligned
		$$
		The last inequality is satisfied for every finite sum of elements in $\mathcal{X}\otimes_{alg}\mathcal{Y}$ and then it's satisfied for all $z\in\mathcal{X\otimes Y}$. It shows that $\{T_{i}\otimes L_{j}\}_{i\in I,j\in J}  $ is $\ast$-operator frame for Hibert $\mathcal{A}\otimes\mathcal{B}$-module $\mathcal{X}\otimes\mathcal{Y}$ with lower and upper $\ast$-operator frame bounds $A\otimes C$ and $ B\otimes D $, respectively.\\
		By the definition of $\ast$-frame operator $S_{T}$ and $S_{L}$ we have:$$S_{T}\xi=\sum_{i\in I}T_{i}^{\ast}T_{i}\xi, \forall \xi\in\mathcal{X},$$
		and
		$$S_{L}\eta=\sum_{j\in J}L_{j}^{\ast}L_{j}\eta, \forall \eta\in\mathcal{Y}.$$
		Therefore
		$$
		\aligned
		(S_{T}\otimes S_{L})(\xi\otimes \eta)&=S_{T}\xi\otimes S_{L}\eta\\
		&=\sum_{i\in I} T_{i}^{\ast} T_{i}\xi\otimes\sum_{j\in J} L_{j}^{\ast} L_{j}\eta\\
		&=\sum_{i\in I,j\in J} T_{i}^{\ast} T_{i}\xi \otimes L_{j}^{\ast} L_{j}\eta\\
		&=\sum_{i\in I,j\in J}(T_{i}^{\ast}\otimes L_{j}^{\ast})(T_{i}\xi\otimes L_{j}\eta)\\
		&=\sum_{i\in I,j\in J}(T_{i}^{\ast}\otimes L_{j}^{\ast})(T_{i}\otimes L_{j})(\xi\otimes \eta)\\
		&=\sum_{i\in I,j\in J}( T_{i}\otimes L_{j})^{\ast})(L_{i}\otimes L_{j})(\xi\otimes \eta).
		\endaligned
		$$
		Now by the uniqueness of $\ast$-frame operator, the last expression is equal to $S_{T\otimes L}(\xi\otimes \eta)$. Consequently we have $ (S_{T}\otimes S_{L})(\xi\otimes \eta)=S_{T\otimes L}(\xi\otimes \eta)$. The last equality is satisfied for every finite sum of elements in $\mathcal{X}\otimes_{alg}\mathcal{Y}$ and then it's satisfied for all $z\in\mathcal{X\otimes Y}$. It shows that $ (S_{T}\otimes S_{L})(z)=S_{T\otimes L}(z)$. So $S_{T\otimes L}=S_{T}\otimes S_{L}$.
	\end{proof}

\end{document}